\numberwithin{equation}{section}
\theoremstyle{plain}
\newtheorem{maintheorem}{Theorem}
\newtheorem{theorem}{Theorem}[section]
\newtheorem{proposition}[theorem]{Proposition}
\newtheorem{corollary}[theorem]{Corollary}
\newtheorem{lemma}[theorem]{Lemma}
\theoremstyle{definition}
\newtheorem{remark}[theorem]{Remark}
\newcommand{\RR}{{\mathbb R}}
\newcommand{\NN}{{\mathbb N}}
\newcommand{\ZZ}{{\mathbb Z}}
\newcommand{\diam}{\operatorname{diam}}
\renewcommand{\epsilon}{\varepsilon}
\newcommand{\dist}{\operatorname{dist}}
\newcommand{\per}{\operatorname{Per}}
\newcommand{\supp}{\operatorname{supp}}
\newcommand{\cT}{\EuScript{T}}
\newcommand{\cP}{\EuScript{P}}
\newcommand{\cO}{\EuScript{O}}
\newcommand{\U}{\EuScript{U}}
\newcommand{\G}{\EuScript{G}}
\newcommand{\R}{\EuScript{R}}
\newcommand{\Z}{\EuScript{Z}}
\def \cA {{\mathcal A}}
\def \cD {{\mathcal D}}
\def \cH {{\mathcal H}}
\def \cO {{\mathcal O}}
\def \cP {{\mathcal P}}
\def \cR {{\mathcal R}}
\def \cT {{\mathcal T}}
\def \cU {{\mathcal U}}
\def \NN {{\mathbb N}}
\def \RR {{\mathbb R}}
\def \ZZ {{\mathbb Z}}
\def \fX {{\mathfrak X}}
\def \fD {{\mathfrak D}}
\title [Dominated splitting and zero volume]
{Dominated splitting and zero volume for
  incompressible three-flows}
\author{Vitor Araujo}
\email{vitor.araujo@im.ufrj.br \text{and} vdaraujo@fc.up.pt}
\address{
Instituto de Matem\'atica,
Universidade Federal do Rio de Janeiro
\\
C. P. 68.530,
  21.945-970 Rio de Janeiro, RJ-Brazil 
\\
\emph{and} 
\\
Centro de Matem\'atica da Universidade do Porto
\\
Rua do Campo Alegre 687, 
4169-007 Porto, Portugal.}
\author{M\'ario Bessa}
\email{bessa@fc.up.pt \text{and} bessa@impa.br}
\address{Centro de Matem\'atica da Universidade do Porto
\\
Rua do Campo Alegre 687, 
4169-007 Porto, Portugal.}
\thanks{V.A. was partially supported by CNPq, FAPERJ and
  PRONEX-Dynamical Systems (Brazil) and FCT (Portugal)
  through CMUP and POCI/MAT/61237/2004. M.B. was partially
  supported by FCT-FSE, SFRH/BPD/20890/2004.}
\date{\today}
\begin{document}

\begin{abstract}
  We prove that there exists an open and dense subset of the
  incompressible $3$-flows of class $C^2$ such that, if a
  flow in this set has a positive volume regular invariant
  subset with dominated splitting for the linear Poincar\'e
  flow, then it must be an Anosov flow. With this result we
  are able to extend the dichotomies of Bochi-Ma\~n\'e (see
  \cite{Mane96,Bochi02,bessa}) and of Newhouse
  (see~\cite{Newhouse77,BesDu07}) for flows \emph{with}
  singularities. That is we obtain for a residual subset of
  the $C^1$ incompressible flows on $3$-manifolds that: (i)
  either all Lyapunov exponents are zero or the flow is
  Anosov, and (ii) either the flow is Anosov or else the
  elliptic periodic points are dense in the manifold.
\end{abstract}

\maketitle

\noindent
\textbf{Keywords:} generic incompressible flows, Lyapunov
exponents, dominated splitting, hy\-per\-bo\-li\-ci\-ty.

\bigskip

\noindent
\textbf{\textup{2000} Mathematics Subject Classification:}
Primary: 37D25; Secondary:37D35, 37D50, 37C40.

\tableofcontents

\section{Introduction}
\label{sec:introd}

Incompressible flows are a traditional subject from Fluid
Mechanics, see e.g.~\cite{galla02}.  These flows are
associated to divergence-free vector fields, they preserve a
volume form on the ambient manifold and thus come equipped
with a natural invariant measure. On compact manifolds this
provides an invariant probability giving positive measure
(volume) to all nonempty open subsets. Therefore for vector
fields $X$ in this class we have $\Omega(X)=M$ by the
Poincar\'e Recurrence Theorem, where $\Omega(X)$ denotes the
non-wandering set. In particular such flows can have neither
sinks nor sources, and in general do not admit Lyapunov
stable sets, either for the flow itself or for the time
reversed flow.

Let $\fX^r(M)$ be the space of $C^r$ vector fields, for any
$r\ge1$, and $\fX^r_\mu(M)$ the subset of divergence-free
vector fields defining incompressible (or conservative)
flows. It is natural to study these flows under the measure
theoretic point of view, besides the geometrical one.

The device of Poincar\'e sections has been used extensively
to reduce several problems arising naturally in the setting
of flows to lower dimensional questions about the behavior
of a transformation. Recent breakthroughs on the
understanding of generic volume-preserving diffeomorphisms
on surfaces have non-trivial consequences for the dynamics
of generic incompressible flows on three-dimensional
manifolds.

The Bochi-Ma\~n\'e Theorem~\cite{Bochi02} asserts that, for
a $C^1$ residual subset of area-preserving diffeomorphisms,
either the transformation is Anosov (i.e. globally
hyperbolic), or the Lyapunov exponents are zero Lebesgue
almost everywhere (i.e. there is no asymptotic growth of the
length of vectors in any direction for almost all
points). This was announced by Ma\~n\'e in~\cite{Mane84} but
only a sketch of a proof was available in~\cite{Mane96}. The
complete proof presented by Jairo Bochi in~\cite{Bochi02}
admits extensions to higher dimensions, obtained by Bochi
and Viana in~\cite{BoV02}, stating in particular that either
the Lyapunov exponents of a $C^1$ generic volume-preserving
diffeomorphism are zero Lebesgue almost everywhere, or else
the system admits a dominated splitting for the tangent
bundle dynamics. A survey of this theory can be found
in~\cite{BochVi04}.

Recently (see Theorem~\ref{thm:bessa1} below) one of the
coauthors was able to use, adapt and fully extend the ideas
of the original proof by Bochi to the setting of generic
conservative flows on three-dimensional compact boundaryless
manifolds \emph{without singularities}, in~\cite{bessa}.
The presence of singularities imposes some differences
between discrete and continuous systems. The ideas from the
Bochi-Mañé proof were partially extended to a \emph{dense}
subset of $C^1$ incompressible flows (see
Theorem~\ref{thm:bessa2} below) admitting singularities but
\emph{without a full dichotomy} between zero exponents and
global hyperbolicity in the same work~\cite{bessa}.

There are related results from Arbieto-Matheus
in~\cite{ArbiMath}, where it is proved that $C^1$ robustly
transitive volume-preserving 3-flows must be Anosov, with
the help of a new perturbation lemma for divergence-free
vector fields, and also from Horita-Tahzibi
in~\cite{hortah06}, where it is proved that robustly
transitive symplectomorphisms must be partially hyperbolic. One of the coauthors together with Rocha proved in~\cite{BeRo2007} that robustly
transitive volume-preserving $n$-flows must have dominated splitting.

There are older $C^1$ dichotomy results for low dimensional
transformations.  A result of fundamental importance in the
theory of generic conservative diffeomorphisms on surfaces
was obtained by Newhouse in~\cite{Newhouse77}.  Newhouse's
theorem states that $C^1$ generic area-preserving
diffeomorphisms on surfaces either are Anosov, or else the
elliptical periodic points are dense.  A refined version of
this results was presented by Arnaud in~\cite{MCA02} in the
family of 4-dimensional symplectomorphisms. Even more
recently Saghin-Xia~\cite{SagXia06a} generalized Arnauld
result for the multidimensional symplectic case, and
in~\cite{BesDu07} one of the coauthors together with Duarte
obtained a similar dichotomy for $C^1$-generic
incompressible flows \emph{without singularities} on
$3$-manifolds: either the flow is Anosov, or else the
elliptic periodic orbits are dense in the manifold.

Here we complete the results of~\cite{bessa}
and~\cite{BesDu07} fully extending the dichotomy from
generic \emph{non-singular} vector fields to generic vector
fields in the family of $C^1$ \emph{all} incompressible
flows on $3$-manifolds.

The main step is our arguments is to show that if a $C^2$
incompressible flow on a $3$-manifold admits a positive
volume invariant subset (not necessarily closed) formed by
regular orbits with a very weak form of hyperbolicity, known
as \emph{dominated decomposition}, then there cannot be any
singularity on the closure of this set, under a mild
non-resonant conditions on the possible eigenvalues at the
singularities. This leads easily to the conclusion that the
closure of this invariant subset is a positive volume
hyperbolic subset. 

Adapting arguments from Bochi-Viana~\cite{BochVi04} to the
flow setting it is proved that incompressible $C^2$ flows
with positive volume compact invariant hyperbolic sets must
be globally hyperbolic. Finally using standard arguments
from Bochi-Viana~\cite{BochVi04},~\cite{bessa} and \cite{BesDu07} these
results imply the $C^1$ generic dichotomies mentioned above
for incompressible flows without any extra condition on the
singularities.

\subsection{Definitions and statement of the results}
\label{sec:statem-results}

In what follows $M$ will always be a $C^\infty$ compact
connected boundaryless three-dimensional Riemannian
manifold. We denote by $\mu$ a volume form on $M$ and by
$\dist$ the distance induced on $M$ by the Riemannian scalar
product, denoted by $<\cdot,\cdot>$.

We begin by recalling Oseledets' Theorem for measure
preserving flows and the notion of \emph{Linear
  Poincar\'e Flow} first introduced by Doering
in~\cite{Do87}.

Consider $X\in{\mathfrak{X}_{\mu}^{1}}(M)$ and the
associated flow $X^{t}:M\rightarrow{M}$. Oseledets' Theorem
~\cite{Os68} guarantees that we have, for $\mu$-a.e. point
$x\in{M}$, a measurable splitting of the tangent bundle at
$x$, $T_{x}M=E^{1}_{x}\oplus...\oplus{E^{k(x)}_{x}}$, called
the \emph{Oseledets splitting} and real numbers
$\lambda_{1}(x)>...>\lambda_{k(x)}(x)$ called \emph{Lyapunov
  exponents} such that
$DX^{t}_{x}(E^{i}_{x})=E^{i}_{X^{t}(x)}$ and
$$
\lim_{t\to\pm\infty}\frac1{t}\log{\|DX^{t}_{x}\cdot
  v^{i}\|=\lambda_{i}(x)}
$$
for any $v^{i}\in{E^{i}_{x}\setminus\vec{0}}$ and
$i=1,...,k(x)$. Oseledets' Theorem allow us to conclude also
that
\begin{equation}\label{angle}
\lim_{t\rightarrow{\pm{\infty}}}\frac{1}{t}\log{|\det(DX^{t}_{x})|
=\sum_{i=1}^{k(x)}\lambda_{i}(x).\dim(E^{i}_{x})}
\end{equation}
which is related to the sub-exponential decrease of the
angle between any subspaces of the Oseledets splitting along
$\mu$-a.e.  orbits. Since $DX^{t}_{x}(X(x))=X(X^{t}(x))$ the
direction of the vector field is one of the Oseledets
subspaces and it is associated to a zero Lyapunov
exponent. The full $\mu$-measure subset of points where
these exponents and directions are defined will be referred
to as the set of \emph{Oseledets points} of $X$.

In the volume-preserving setting we have
$|\det(DX^{t}_{x})|=1$. Hence on 3-manifolds
by~(\ref{angle}) either $\lambda_{1}(x)=-\lambda_{3}(x)>0$
or both are zero. If $\lambda_{1}(x)>0$, then we obtain two
directions $E^{u}_{x}$ and $E^{s}_{x}$ respectively
associated to $\lambda_{1}(x)$ and $\lambda_{3}(x)$ which we
denote by $\lambda_{u}(x)$ and $\lambda_{s}(x)$.

We say that $\sigma\in M$ is a \emph{singularity} of $X$ if
$X(\sigma)=\vec{0}$ and we denote by $S(X)$ the set of all
singularities of $X$. The complement $M\setminus S(X)$ is
the set of \emph{regular} points for the flow of $X$.
For a regular point $z$ of $X$ denote by
\begin{align*}
  N_z=\{v\in T_zM: <v, X(z)>=0\}
\end{align*}
the orthogonal complement of the flow direction
$[X]_z=[X(z)]:=\RR\cdot X(z)$ in $T_zM$.  Denote by
$O_z:T_zM\to N_z$ the orthogonal projection of $T_z M$ onto
$N_z$.  For every $t\in\RR$ define
\begin{align*}
P_X^t(z):N_z\to N_{X^t(z)}
\quad\text{by}\quad
P_X^t(z)=O_{X^t(z)}\circ DX^t_z.
\end{align*}
It is easy to see that $P=\{P_X^t(z):t\in\RR, X(z)\neq \vec{0}\}$
satisfies the cocycle identity
\begin{align*}
  P^{s+t}_{X}(z)=P^s_{X}(X^t(z))\circ P^s_{X}(z)
  \quad\text{for every}\quad t,s\in\RR.
\end{align*}
The family $P$ is called the {\em Linear Poincar\'e Flow} of
$X$.

If we have an Oseledets point $x\notin S(X)$ and
$\lambda_{1}(x)>0$, the Oseledets splitting on $T_{x}M$
induces a $P^{t}_{X}$-invariant splitting on $N_{x}$, say
$O_{x}(E^{\diamond}_{x})=N^{\diamond}_{x}$ for
$\diamond=u,s$. If $\lambda_{1}(x)=0$, then the
$P_{X}^{t}$-invariant splitting is trivial. Using
(\ref{angle}) it is easy to see that the Lyapunov exponents
of $P_{X}^{t}(x)$ associated to the subspaces $N^{u}_{x}$
and $N^{s}_{x}$ are respectively $\lambda_{u}(x)\geq 0$ and
$\lambda_{s}(x)\leq{0}$.

We now define dominated structures for the Linear
Poincar\'e Flow. Given a \emph{regular} invariant subset
$\Lambda$ for $X\in\fX^1(M)$, that is $\Lambda\cap
S(X)=\emptyset$, an invariant splitting $N^1\oplus N^2$ of
the normal bundle $N_\Lambda$ for the Linear Poincar\'e Flow
$P^t_X$ is said to be \emph{$m$-dominated}, if there exists
an integer $m$ such that for \emph{every} $x\in\Lambda$ we
have the domination relation
\begin{equation}\label{DS}
  \frac{\big\| P^m_{X}(x)\mid N^1\big\|}{\big\| P^m_{X}(x)\mid
    N^2\big\|} \le \frac12.
\end{equation}
Dominated splittings are automatically continuous on the
Grassmanian of plane subbundles of the tangent bundle, see
e.g. \cite{HPS77,BDV2004} for an exposition of the theory.
In particular the dimensions of the subbundles are constant
on each connected component of $\Lambda$.

As is traditional we say that a vector field is
\emph{Anosov} if the flow preserves a globally defined
hyperbolic structure, that is, the tangent bundle $TM$
splits into three continuous $DX^t$-invariant subbundles
$E\oplus [X]\oplus G$ where $[X]$ is the flow direction, the
sub-bundle $E\neq\vec{0}$ is uniformly contracted and the
sub-bundle $G\neq\vec{0}$ is uniformly expanded by $DX_t$
for $t>0$. Note that for an Anosov flow $X$ the entire
manifold is $m$-dominated for some $m\in\NN$. The fact that
the dimensions of the subbundles are constant on the entire
manifold implies that $S(X)=\emptyset$ for an Anosov vector
field.

Denote by $\fX^r_\mu(M)^\star$ the subset of $\fX^r_\mu(M)$
of $C^r$ incompressible flows but \emph{without
  singularities}. 

\begin{theorem}{\cite[Theorem 1]{bessa}}
  \label{thm:bessa1}
  There exists a residual set $\cR\subset \fX^1_\mu(M)^\star$
  such that, for $X\in\cR$, either $X$ is Anosov or else for
  Lebesgue almost every $p\in M$ all the Lyapunov exponents
  of $X^t$ are zero.
\end{theorem}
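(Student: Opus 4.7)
The plan is to mimic Bochi's proof of the surface diffeomorphism dichotomy, but replacing the tangent cocycle by the Linear Poincar\'e Flow $P^t_X$ on the normal bundle $N$, which is the natural two-dimensional volume-preserving cocycle once $S(X)=\emptyset$. First I would introduce the integrated upper Lyapunov exponent
\begin{equation*}
\Lambda(X)=\int_M \lambda_u(x)\,d\mu(x),
\end{equation*}
where $\lambda_u(x)\ge 0$ is the largest Lyapunov exponent of $P^t_X$ at $x$ (with $\lambda_s=-\lambda_u$ by incompressibility on the $2$-dimensional normal bundle). A classical argument using subadditivity of $\log\|P^t_X\|$ and Fatou's lemma shows that $\Lambda:\fX^1_\mu(M)^\star\to[0,\infty)$ is upper semicontinuous in the $C^1$ topology. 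Since $\fX^1_\mu(M)^\star$ is a Baire space, $\Lambda$ has a residual set $\cR_0$ of continuity points.

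Next, fix $X\in\cR_0$ and assume $\Lambda(X)>0$. I would show that $X$ must be Anosov, which closes the dichotomy since $\Lambda(X)=0$ already gives all Lyapunov exponents equal to zero Lebesgue a.e.\ (the flow direction always contributes a zero exponent, and on a $3$-manifold the two normal exponents are opposite). To do this, I would let $\Gamma\subset M$ be the set of Oseledets regular points with $\lambda_u>0$ and argue that $\Gamma$ admits a dominated splitting for $P^t_X$. The key step is the flow version of Bochi's \emph{realization lemma}: if on a subset of positive measure the splitting $N^u\oplus N^s$ fails to be $m$-dominated for every $m$, then one can perform a small $C^1$ divergence-free perturbation of $X$ (using the conservative perturbation tools developed in the incompressible flow setting, together with Doering's observation that one-parameter rotations inside the normal bundle integrate to divergence-free vector fields) that rotates $N^u$ onto $N^s$ along a long orbit segment and thereby strictly decreases $\Lambda$ by a definite amount. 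This contradicts the continuity of $\Lambda$ at $X$.

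Having obtained a dominated splitting of $N_\Gamma$, I would then pass from positive measure to full support: by Poincar\'e recurrence, $\mu$-a.e.\ orbit is recurrent, and dominated splittings extend continuously to the closure, so the splitting extends to $\overline{\Gamma}$; combined with $\Omega(X)=M$ and the genericity of Kupka--Smale vector fields in $\cR_0$, an invariant set of positive volume with a dominated splitting in fact fills $M$. Finally, a dominated splitting of $N$ for an incompressible $3$-flow without singularities is automatically uniformly hyperbolic, because in two dimensions domination together with area preservation forces the expansion and contraction rates to be bounded away from $1$; lifting $N^u\oplus N^s$ back to $E^u\oplus[X]\oplus E^s$ in $TM$ yields the Anosov structure.

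The main obstacle I anticipate is the realization step: producing conservative $C^1$ perturbations of the vector field that realize a prescribed rotation of the Linear Poincar\'e Flow along a prescribed orbit segment, while keeping the perturbation divergence-free and with controlled $C^1$ size. All the rest (semicontinuity of $\Lambda$, the passage from positive measure domination to global domination, and the two-dimensional argument giving hyperbolicity from domination) is fairly standard once that perturbation tool is in place.
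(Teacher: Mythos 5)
The overall strategy you outline — upper semicontinuity of the integrated exponent, restriction to continuity points, a Bochi-type realization lemma adapted to the Linear Poincar\'e Flow with divergence-free perturbations, and the observation that domination plus area preservation in the two-dimensional normal bundle gives hyperbolicity — is indeed the right skeleton, and it is essentially the one underlying \cite{bessa} and the present paper's generalization in Theorem~\ref{mthm:anosov-zero}. You also correctly identify the conservative realization lemma for $P^t_X$ as the main technical investment.

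However, there is a genuine gap in the step you describe as ``fairly standard,'' namely the passage from a positive-volume invariant set with a dominated Linear Poincar\'e Flow to the conclusion that the \emph{whole} manifold carries that structure. You invoke Poincar\'e recurrence, the equality $\Omega(X)=M$, and Kupka--Smale genericity, but none of these closes the gap. Poincar\'e recurrence and $\Omega(X)=M$ hold for every conservative flow and do not prevent a compact hyperbolic set from having Lebesgue measure strictly between $0$ and $1$; a recurrent orbit inside such a set is dense only in its own $\omega$-limit set, not in $M$. Kupka--Smale genericity controls periodic orbits, singularities and transversality of their invariant manifolds, and has nothing to say about the Lebesgue measure of a hyperbolic set. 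At the raw $C^1$ level there is simply no Bowen--Ruelle-type statement available, which is why this step cannot be dispatched by generic position arguments.

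The actual mechanism — both in \cite{bessa} and in the present paper's Section~\ref{sec:generic-dichot-incom} — is to go through $C^{1+}$ regularity. One first uses Zuppa's theorem that $\fX^2_\mu(M)$ is $C^1$-dense in $\fX^1_\mu(M)$, then for a $C^{1+}$ (in fact $C^2$) incompressible flow proves that a compact hyperbolic invariant set of positive volume must be all of $M$ (this is the content of Proposition~\ref{pr:unifhyp-volzero}, proved via strong-stable disks inside the set and incompressibility, adapting \cite[Appendix~B]{BochVi04}). With this dichotomy at hand on a $C^1$-\emph{dense} set $\cD$ (Lemma~\ref{Bowen2}), the realization/perturbation step (Proposition~\ref{main}) shows that on $\cD\setminus\cA$ one can make $L$ arbitrarily small, so the open sets $\cA_k=\{L<1/k\}$ are dense in $\cA^c$; the residual set is then $\bigcap_k(\cA\cup\cA_k)$, not the set of continuity points \emph{per se}, though the two constructions are closely related. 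In short: the key ingredient you are missing is the $C^{1+}$ bounded-distortion argument that turns ``positive volume hyperbolic set'' into ``Anosov,'' together with the density of $C^{1+}$ fields; without it the argument does not close.

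A smaller imprecision worth flagging: what the realization lemma gives at a continuity point is that the Oseledets splitting is $m$-dominated for \emph{some} $m=m(x)$ at $\mu$-a.e.\ point $x\in\Gamma$, i.e.\ $\Gamma$ is, mod $0$, an increasing union of $m$-dominated compacta $\Lambda_m$; it does not immediately give a single dominated splitting on all of $\Gamma$. Passing to a uniform $m$ on a set of almost full measure and then to the closure is again where the measure-theoretic dichotomy (zero or full) is used, so the two gaps are really the same one.
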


Developing the ideas of the proof of this result one can
also obtain the following statement on denseness of
dominated splitting, now admitting singularities.

\begin{theorem}{\cite[Theorem 2]{bessa}}
  \label{thm:bessa2}
  There exists a dense set $\fD\subset \fX^1_\mu(M)$ such
  that for $X\in\fD$, there are invariant subsets $D$ and
  $Z$ whose union has full measure, such that
\begin{itemize}
\item for $p\in Z$ the flow has only zero Lyapunov exponents;
\item $D$ is a countable increasing union $\Lambda_{m_n}$ of
  invariant sets admitting an $m_{n}$-dominated splitting
  for the Linear Poincar\'e Flow, where $m_n$ is a strictly
  increasing integer sequence.
\end{itemize}
\end{theorem}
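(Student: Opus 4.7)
The plan is to follow the Bochi--Viana strategy adapted to the Linear Poincar\'e Flow cocycle, with extra care devoted to the singularities of $X$. For each $X\in\fX^1_\mu(M)$ and each integer $m\ge 1$, let $\Lambda_m(X)$ denote the maximal regular $X^t$-invariant subset on which $P^t_X$ admits an $m$-dominated splitting in the sense of \reff{DS}. By the continuity property of dominated splittings recalled after \reff{DS} each $\Lambda_m(X)$ is closed in $M\setminus S(X)$ and the family is nested in $m$. Set $D(X):=\bigcup_{m\ge 1}\Lambda_m(X)$ and $Z(X):=M\setminus\bigl(D(X)\cup S(X)\bigr)$. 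Since $S(X)$ is the zero set of the smooth map $X$ and generically has zero $\mu$-measure, the union $D(X)\cup Z(X)$ already has full volume; what the theorem really asserts is that all Lyapunov exponents of $P^t_X$ vanish $\mu$-a.e.\ on $Z(X)$ for a dense set of $X$.

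The starting point is the upper semicontinuity in the $C^1$ topology of the integrated largest exponent
\[
 LE(X):=\int_M \lambda_1^+(x,X)\,d\mu(x),
\]
which follows from Kingman's subadditive ergodic theorem applied to $\log\|P^m_X\|$ combined with the continuity of the Linear Poincar\'e Flow on compact subsets of $M\setminus S(X)$. A standard Baire argument then produces a dense $G_\delta$ set $\fD\subset\fX^1_\mu(M)$ of continuity points of $LE$. I would then argue that every $X\in\fD$ satisfies the dichotomy in the statement.

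Fix such an $X\in\fD$ and suppose for contradiction that $A:=\{x\in Z(X):\lambda_1(x,X)>0\}$ has positive $\mu$-measure. By the very definition of $Z(X)$, no orbit in $A$ is contained in any $\Lambda_m(X)$, hence along the orbit of a Lebesgue density point of $A$ the Oseledets splitting fails the $m$-domination inequality for every $m\ge 1$. This is precisely the hypothesis for the Ma\~n\'e--Bochi rotation construction: along long orbit segments one can produce arbitrarily small $C^1$ perturbations of the cocycle that rotate the Oseledets unstable direction into the stable one, thereby decreasing $\log\|P^t_X\|$ in $L^1$-mean. The divergence-free perturbation lemma for incompressible vector fields (the flow analogue of Zuppa's lemma used in \cite{bessa}) lets each such cocycle perturbation be realised by a genuine volume-preserving $C^1$-small change of $X$ supported in a small flow box disjoint from $S(X)$. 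Concatenating these perturbations along a disjoint Kakutani tower that covers a definite fraction of $A$ yields $\tilde X$ arbitrarily $C^1$-close to $X$ with $LE(\tilde X)\le LE(X)-\delta$ for a definite $\delta>0$, contradicting continuity of $LE$ at $X$.

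The central obstacle, absent from the non-singular case of Theorem \ref{thm:bessa1}, is that the Linear Poincar\'e Flow is not defined on $S(X)$ and may blow up as one approaches a singularity, while typical orbits may linger arbitrarily long near $S(X)$. I would handle this by performing all perturbations in the exterior of a shrinking tube $U_\eta:=\{\dist(\cdot,S(X))<\eta\}$, using Birkhoff's ergodic theorem together with the Poincar\'e recurrence theorem to show that for $\mu$-a.e.\ $x\in A$ the asymptotic time-fraction spent inside $U_\eta$ tends to $0$ with $\eta$, and controlling the excursions into $U_\eta$ via integrability of $\log^+\|P^1_X\|$ (obtained from a local normal form for $X$ near each singularity). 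This step is where most of the technical work lies: once it is in place, the Bochi--Viana perturbation machinery runs essentially verbatim on the exterior set and produces the required dense $\fD$.
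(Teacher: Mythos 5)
This statement is quoted from reference \cite{bessa} (it is Theorem~2 there); the present paper does not contain a proof of it, so there is no ``paper's own proof'' to compare against directly. That said, the general strategy you describe --- defining the dominated sets $\Lambda_m(X)$, decomposing $M$ into $D(X)\cup Z(X)\cup S(X)$, exploiting the upper semicontinuity of the integrated top exponent, and using Ma\~n\'e--Bochi rotations along Kakutani towers realised by divergence-free perturbations --- is indeed the Bochi--Ma\~n\'e/Bochi--Viana route that \cite{bessa} adapts to the Linear Poincar\'e Flow for incompressible $3$-flows.

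There is, however, a substantive gap, and a clear internal warning sign that something is off. Your argument goes through a dense $G_\delta$ set of continuity points of $LE$, which would give a \emph{residual} set, whereas the theorem you are trying to prove asserts only a \emph{dense} set in the singular case (the residual dichotomy in the presence of singularities is precisely what this paper's Theorem~\ref{mthm:anosov-zero} adds, by a completely different route through the $C^2$ rigidity statement of Theorem~\ref{thm:there-exists-generic} and Zuppa's density theorem). The fact that you ``obtain'' more than \cite{bessa} claims means the argument cannot be closed at the level of detail given. The weak link is exactly the step you flag: realising the cocycle rotations by $C^1$-small divergence-free perturbations of $X$ supported away from $S(X)$, and then controlling the contribution of the excursions near $S(X)$. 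Birkhoff plus shrinking tubes controls the \emph{time fraction} spent in $U_\eta$, but the Linear Poincar\'e Flow on a passage through $U_\eta$ can contribute a norm growth of order $e^{\lambda T}$ with $T$ the (unbounded) passage time near a hyperbolic zero, and the flow-box geometry available for the perturbation outside $U_\eta$ degenerates as orbits enter and leave $U_\eta$ nearly tangentially; a perturbation that rotates $N^u$ into $N^s$ by a definite angle per unit time then requires a perturbation of $X$ whose $C^1$ size does not go to zero. Asserting that ``once it is in place, the Bochi--Viana perturbation machinery runs essentially verbatim'' hides precisely the part of the proof that is genuinely new in \cite{bessa}; one would need to state and prove a quantitative realisation lemma for the Linear Poincar\'e Flow on flow boxes whose geometry is uniform \emph{and} a matching estimate showing that the contribution from the excised tubes to $LE$ is negligible, and neither is supplied.
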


We recall another $C^1$-type result for
in\-com\-pre\-ssi\-ble three-di\-men\-sio\-nal flows without
fixed points. Preliminary versions for the discrete
symplectic case were presented
in~\cite{Newhouse77,MCA02,SagXia06a} respectively for
surfaces, $4$-dimensional manifolds and $2n$-dimensional
manifolds.

\begin{theorem}\label{thm:BD}{\cite[Theorem 1.2]{BesDu07}}
  Given $\epsilon>0$, any open subset $U$ of $M$ and a non
  Anosov vector field $X\in
  \mathfrak{X}^{1}_{\mu}(M)^{\star}$, there exists $Y\in
  \mathfrak{X}^{1}_{\mu}(M)^{\star}$ such that $Y$ is
  $\epsilon$-$C^1$-close to $X$ and $Y$ has an elliptic
  closed orbit intersecting $U$.
\end{theorem}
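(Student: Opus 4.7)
The plan is to follow the classical Newhouse--Ma\~n\'e--Bochi strategy, combining the dichotomy of Theorem~\ref{thm:bessa1} with two conservative perturbation tools adapted to the divergence-free setting: a $C^1$ closing lemma and a Franks-type lemma for volume-preserving flows.

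First I would exploit that the Anosov property is $C^1$-open: an initial perturbation of size $\le\epsilon/3$ places $X$ inside the residual set $\cR$ of Theorem~\ref{thm:bessa1} while keeping it non-Anosov. Hence I may assume Lebesgue-a.e.\ point of $M$, and in particular a positive-$\mu$-measure subset of $U$, has all Lyapunov exponents of $X^t$ equal to zero. Fix such an Oseledets point $p\in U$: the Linear Poincar\'e Flow at $p$ has subexponential growth in both non-trivial directions. Applying a conservative $C^1$ closing lemma (the divergence-free Pugh--Robinson version used in~\cite{bessa,BesDu07}) at a suitable recurrent return of the orbit of $p$, followed if necessary by a Bochi-type rotation of the Linear Poincar\'e Flow along the shadowed segment (in the spirit of~\cite{Mane96,Bochi02,BochVi04}), would then yield $Y_1\in\fX^1_\mu(M)^\star$, at $C^1$-distance $\le 2\epsilon/3$ from $X$, possessing a closed orbit $\ga$ intersecting $U$ whose first-return map $P^T_{Y_1}(z):N_z\to N_z$ at $z\in\ga\cap U$ is a $2\times 2$ area-preserving linear map with both eigenvalues arbitrarily close to the unit circle.

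The matrix $P^T_{Y_1}(z)\in SL(2,\RR)$ thus satisfies $|\mathrm{tr}\,P^T_{Y_1}(z)|\le 2+o(1)$. If this trace already lies in $(-2,2)$ the orbit $\ga$ is elliptic and we are done. Otherwise I would invoke a Franks-type perturbation lemma for divergence-free vector fields (\cite{ArbiMath}, together with its flow-version deployment in~\cite{BesDu07}) to realise any sufficiently small $SL(2,\RR)$-perturbation of $P^T_{Y_1}(z)$ through an $(\epsilon/3)$-$C^1$ divergence-free perturbation of $Y_1$, supported in a thin tubular neighbourhood of $\ga$ and preserving $\ga$ as a closed orbit. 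Since the elliptic locus $\{A\in SL(2,\RR):|\mathrm{tr}(A)|<2\}$ is open and has the parabolic set in its boundary, an arbitrarily small trace-decreasing perturbation brings $P^T$ into it, and the resulting $Y\in\fX^1_\mu(M)^\star$ has $\|Y-X\|_{C^1}<\epsilon$ and an elliptic closed orbit meeting $U$, as required.

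The main obstacle is the coherent, volume-preserving deployment of the three ingredients in tandem: the divergence-free closing lemma, the Bochi rotation along the shadowed piece, and the divergence-free Franks-type lemma are all substantially more rigid than their non-conservative analogues, and one must simultaneously keep the cumulative $C^1$-size below $\epsilon$, guarantee that $\ga$ persists as a closed orbit after each step, and ensure that no singularity is ever introduced (so the final field stays in $\fX^1_\mu(M)^\star$). Establishing and combining the conservative closing lemma with the divergence-free Franks lemma --- and choosing the shadowing time so that the Bochi rotation actually collapses the Oseledets directions --- is the technical heart of the argument.
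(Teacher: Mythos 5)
This statement is quoted from \cite{BesDu07}; the paper does not reprove it, but its proof of the more general Theorem~\ref{thr:open} exposes exactly the argument your sketch needs and is the natural point of comparison. The genuine gap in your proposal is the central claim that closing a recurrent point $p$ with all Lyapunov exponents zero, ``followed if necessary by a Bochi-type rotation,'' yields a periodic orbit whose first-return map has both eigenvalues near the unit circle. Zero Lyapunov exponents give only subexponential growth $\|P^t_X(p)\|=e^{o(t)}$, so after closing the resulting periodic orbit of period $\ell$ satisfies merely $\|P^\ell\|<e^{\delta\ell}$ for a small $\delta>0$; this does not place $\|P^\ell\|$ near $1$, nor its trace near $2$. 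More seriously, a Bochi-type rotation can push the return map into the elliptic locus only if the closed orbit has no dominated splitting for the Linear Poincar\'e Flow and has stable/unstable angle bounded below, and your argument establishes neither.

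What actually closes the gap is the trichotomy made explicit in the proof of Theorem~\ref{thr:open}: either the closed orbit has small angle, and Lemma~\ref{le:smallangle} gives an elliptic orbit directly; or it has angle bounded below but carries no $m$-dominated splitting for $m=m(\epsilon,\theta)$, and Lemma~\ref{le:nodomination} --- which is your ``Bochi rotation'' made rigorous --- gives one; or it has bounded angle \emph{and} an $m$-dominated splitting, which is impossible because domination forces $\|P^\ell\|\ge 2^{\lfloor\ell/m\rfloor}$, contradicting $\|P^\ell\|<e^{\delta\ell}$ once $\delta<\log 2/m$. That last contradiction is not decorative: it is precisely what licenses applying the rotation lemma, so it cannot be omitted. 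Once Lemmas~\ref{le:smallangle} and~\ref{le:nodomination} are invoked correctly they already deliver an elliptic orbit, so your final Franks-type trace adjustment is redundant. A secondary issue: perturbing a non-Anosov $X$ into a residual set while keeping it non-Anosov does not follow from openness of the Anosov locus alone (non-Anosov is closed, not open); the paper's proof of Theorem~\ref{thr:open} in fact starts from $X$ outside the $C^1$-closure of the Anosov set, and recovers the full generic statement only at the level of Corollary~\ref{cor:B} by discarding that nowhere dense boundary.
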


We are able to extend Theorem~\ref{thm:bessa1} and
Theorem~\ref{thm:BD} to the full family of incompressible
$C^1$ flows. Here are our main results.

\begin{maintheorem}
  \label{mthm:anosov-zero}
  There exists a generic subset $\R\subset\fX^1_\mu(M)$ such
  that for $X\in\R$
  \begin{itemize}
  \item either $X$ is Anosov,
  \item or else for Lebesgue almost every $p\in M$ all the
    Lyapunov exponents of $X^t$ are zero.
\end{itemize}
\end{maintheorem}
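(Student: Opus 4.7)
The plan is to combine the structural pieces announced in the introduction. First, I set up a $C^1$-dense subset
\[
\fD^2 := \fD \cap \fX^2_\mu(M) \cap \cN \subset \fX^1_\mu(M),
\]
where $\fD$ is the dense set from Theorem~\ref{thm:bessa2}, $\fX^2_\mu(M)$ is $C^1$-dense in $\fX^1_\mu(M)$ by a Zuppa-type smoothing result for divergence-free vector fields, and $\cN$ is a $C^2$-residual subset imposing the mild non-resonance condition on the eigenvalues at every singularity alluded to in the introduction. I will prove the dichotomy directly on $\fD^2$ and then upgrade from dense to residual.

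Fix $X \in \fD^2$ that is not Anosov; the goal is to show that Lebesgue almost every point has only zero Lyapunov exponents. By Theorem~\ref{thm:bessa2}, $M = D \cup Z$ up to a $\mu$-null set, where $Z$ already has only zero exponents and $D = \bigcup_n \Lambda_{m_n}$ is a countable increasing union of regular invariant sets each admitting an $m_n$-dominated splitting for the Linear Poincar\'e flow. If $D$ is $\mu$-null we are done; otherwise some $\Lambda_{m_n}$ has positive volume. Now I apply the main structural step of the paper: the non-resonance condition forbids any singularity from lying in $\ov{\Lambda_{m_n}}$, and continuity of dominated splittings on the Grassmannian extends $m_n$-domination to the positive-volume compact regular invariant set $\ov{\Lambda_{m_n}}$. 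By the Bochi--Viana-type conclusion also announced in the introduction, positive-volume regular compact invariant sets with dominated splitting are hyperbolic for $C^2$ incompressible $3$-flows, and the existence of a positive-volume hyperbolic invariant set then forces global hyperbolicity. Hence $X$ would be Anosov, contradicting our assumption.

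To pass from the dichotomy on the dense set $\fD^2$ to a residual set $\R \subset \fX^1_\mu(M)$, I follow the classical Bochi--Ma\~n\'e approach already used in the proofs of Theorems~\ref{thm:bessa1} and~\ref{thm:BD}: the largest integrated Lyapunov exponent
\[
X \longmapsto L(X) := \int_M \lambda_u(X,x)\, d\mu(x),
\]
well-defined because $\mu(S(X))=0$ for divergence-free $X$, is upper semi-continuous on $\fX^1_\mu(M)$, so its set of continuity points is residual. At any continuity point $X$, either $L(X)=0$ (and then the three-dimensional pairing $\lambda_u = -\lambda_s$ forces all exponents to vanish $\mu$-a.e.), or else approximation by elements of $\fD^2$ together with openness of the Anosov set forces $X$ itself to be Anosov. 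Intersecting with this continuity set gives the required $\R$.

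The hardest step is clearly the main structural result that excludes singularities from $\ov{\Lambda_{m_n}}$. Near a singularity $\sigma$ the projection $O_x:T_xM\to N_x$ degenerates as $X(x)\to \vec 0$, so the Linear Poincar\'e flow becomes singular and an $m$-dominated splitting along regular orbits accumulating on $\sigma$ need not extend naively to $T_\sigma M$. Ruling out such accumulation demands combining (i) a careful analysis of how the $DX^t$-invariant hyperbolic structure at $\sigma$ interacts with orbits that spend long times near $\sigma$, with (ii) the non-resonance condition on the eigenvalues of $DX_\sigma$, which prevents the degeneracy of $O_x$ from conspiring with the flow direction to produce a spurious normal dominated splitting that would otherwise survive the approach to the singularity.
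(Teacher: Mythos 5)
Your overall plan — establish the dichotomy on a $C^1$-dense set of good vector fields using Theorem~\ref{thm:there-exists-generic}, then upgrade via Baire category and upper semicontinuity of $L$ — is the right skeleton, and it is essentially what the paper does. But both key steps contain genuine gaps as written.

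First, the density of $\fD^2 = \fD\cap\fX^2_\mu(M)\cap\cN$ is not justified. An intersection of three $C^1$-dense sets is not automatically dense, and Theorem~\ref{thm:bessa2} only provides a \emph{dense} (not residual) set $\fD$ with no stated regularity, so there is no reason $\fD$ should meet $\fX^2_\mu(M)$ at all, let alone densely. The paper avoids this by building its own dense set $\cD$ from scratch in Lemma~\ref{Bowen2}: it takes the $C^2$-residual Kupka--Smale set inside $\fX^2_\mu(M)$, intersects it with the open dense $\G$ of Theorem~\ref{thm:there-exists-generic} (both residual in $\fX^2_\mu(M)$, so the intersection is $C^2$-residual hence $C^2$-dense), and then pushes $C^2$-density down to $C^1$-density by Zuppa. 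This completely bypasses Theorem~\ref{thm:bessa2}; instead of inheriting the $D\cup Z$ decomposition, the paper uses Proposition~\ref{main}, a Bochi-type perturbation lemma, to make $L$ arbitrarily small near any non-Anosov $X\in\cD$. Your route of invoking Theorem~\ref{thm:bessa2} directly to get $L(X)=0$ on a dense set would be a genuine shortcut if the density of the intersection were secured, but it isn't.

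Second, and more seriously, the passage from dense to residual is wrong. You assert that at a continuity point $X$ of $L$ with $L(X)>0$, ``approximation by elements of $\fD^2$ together with openness of the Anosov set forces $X$ itself to be Anosov.'' It does not. All you obtain is that the open set $\cA$ of Anosov flows meets every neighborhood of $X$ densely; openness of $\cA$ says nothing about whether the boundary point $X$ itself lies in $\cA$. (An open set can be dense near a point without containing it.) The correct residual set is the one the paper uses,
\[
\cR=\bigcap_{k\in\NN}\big(\cA\cup\cA_k\big),\qquad \cA_k=\{Y\in\fX^1_\mu(M):L(Y)<1/k\},
\]
where $\cA$ is open, $\cA_k$ is open by upper semicontinuity, and $\cA\cup\cA_k$ is dense because every element of the dense set $\cD$ either is Anosov or (by Proposition~\ref{main}) can be $C^1$-perturbed to have $L<1/k$. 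With this choice, $X\in\cR\setminus\cA$ forces $L(X)=0$ directly, with no need to argue about continuity points. Your own dichotomy on a dense set (once the density gap is repaired) would serve the same purpose of showing $\cA\cup\cA_k$ is dense, so the fix is mechanical: replace ``continuity points of $L$'' by $\bigcap_k(\cA\cup\cA_k)$.

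The final paragraph about excluding singularities from $\overline{\Lambda_{m_n}}$ is a description of what Theorem~\ref{thm:there-exists-generic} accomplishes, not part of the proof of the present statement; since Theorem~\ref{mthm:anosov-zero} takes Theorem~\ref{thm:there-exists-generic} as given, nothing there needs reproving, and the remarks are harmless but beside the point.
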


\begin{maintheorem}\label{thr:open}
  Let $\epsilon>0$, an open subset $U$ of $M$ and a non Anosov vector
  field $X\in \mathfrak{X}^{1}_{\mu}(M)$ be
  given. Then there exists $Y\in
  \mathfrak{X}^{1}_{\mu}(M)$ such that $Y$ is
  $C^1$-$\epsilon$-close to $X$ and $Y^{t}$ has an elliptic
  closed orbit intersecting $U$.
\end{maintheorem}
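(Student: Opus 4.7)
The plan is to transfer the proof of Theorem~\ref{thm:BD} from~\cite{BesDu07} (stated only for non-singular flows) to all of $\fX^1_\mu(M)$, using the main technical result of this paper as the input that controls the behaviour near singularities.

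First, I reduce to a convenient dense subclass of non-Anosov vector fields. Being Anosov is $C^1$-open, so the set of non-Anosov fields is $C^1$-closed and hence Baire; since $C^2$ vector fields are $C^1$-dense, since the non-resonance condition at singularities required by the main technical theorem of this paper is $C^1$-generic, and since the set $\fD$ of Theorem~\ref{thm:bessa2} is $C^1$-dense, I can pick a non-Anosov $C^2$ vector field $X_0\in\fD$ with non-resonant singularities and $C^1$-$(\epsilon/2)$-close to $X$. The main technical theorem, read contrapositively, now says that no regular invariant subset of $X_0$ admitting a dominated splitting for the linear Poincaré flow can have positive volume. Consequently each $\Lambda_{m_n}$ in the decomposition of Theorem~\ref{thm:bessa2} has zero measure, so the invariant set $Z$ of Oseledets points with only zero Lyapunov exponents has full Lebesgue measure.

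Next I run the elliptic closing scheme of~\cite{BesDu07}. Since $U$ has positive volume and $S(X_0)$ has zero volume, I select a regular Lebesgue density point $p\in Z\cap U$. Using the Arbieto--Matheus conservative closing lemma~\cite{ArbiMath}, an arbitrarily $C^1$-small perturbation $Y_0$ of $X_0$ admits a closed orbit $\gamma$ through $U$ whose return map $P^T_{Y_0}$ of the linear Poincaré flow is as close to an isometry of the normal bundle as desired. A further $C^1$-small Franks-type perturbation, supported in a flowbox around a point of $\gamma\cap U$ and leaving $\gamma$ intact, rotates the eigenvalues of this area-preserving $2\times 2$ return map to a conjugate pair on the unit circle; the composite vector field $Y$ is then $\epsilon$-$C^1$-close to $X$ and carries an elliptic closed orbit intersecting $U$.

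The main obstacle is that each perturbation step (closing, Franks-type) requires a well-defined Poincaré section and a tubular flowbox, both of which degenerate when orbits approach $S(X_0)$, where the linear Poincaré flow itself blows up. The main technical theorem of this paper circumvents this obstruction by forcing $Z$ to have full measure, so the density point $p$ and the resulting closed orbit $\gamma$ can be chosen at a positive distance from $S(X_0)$, letting the classical perturbation lemmas apply without modification.
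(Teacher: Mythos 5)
Your plan starts in the same spirit as the paper's proof of Theorem~\ref{thr:open} (reduce via Theorem~\ref{mthm:anosov-zero}/Theorem~\ref{thm:there-exists-generic} to a nearby flow that is $\mu$-a.e.\ non-hyperbolic, close a recurrent orbit, then try to produce an elliptic closed orbit), but there is a genuine gap at the final perturbation step.

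Having picked $p\in Z$ with all Lyapunov exponents zero and a large return time $T$, the conclusion you can extract is that $\|P_{Y_0}^{T}(p)\|<e^{\delta T}$ for a prescribed small $\delta>0$ --- \emph{not} that the return map is close to an isometry. Since $T$ must be taken large for the closing lemma to give a $C^1$-small perturbation, $e^{\delta T}$ is typically huge; the return map is a strongly hyperbolic area-preserving $2\times2$ matrix with small exponent \emph{per unit time}. A Franks-type perturbation supported in a single flowbox near one point of $\gamma\cap U$ changes the derivative of the return map by a factor bounded uniformly in $T$, so it cannot rotate eigenvalues of size $e^{\pm\delta T}$ onto the unit circle without blowing the $C^1$-size. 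The only way to do it with an $\epsilon$-$C^1$-small perturbation is to distribute the perturbation over the whole orbit, and whether that succeeds depends on the finer structure of the cocycle along $\gamma$ --- precisely what Lemmas~\ref{le:smallangle} and~\ref{le:nodomination} (from~\cite{BesDu07}) quantify: either the angle between $N^s$ and $N^u$ gets small somewhere, or the splitting fails to be $m$-dominated for the relevant $m$; only then can one create the elliptic orbit. The paper therefore runs a trichotomy (cases (A), (B), (C)) after invoking Pugh's General Density Theorem, and in the remaining ``good'' case (C) proves the needed failure of domination by contradiction: uniform $m$-domination of the closed orbit forces $\frac1{\ell}\log\|P_Z^{\ell}(p)\|\ge\frac1m\log 2>\delta$, which is incompatible with the estimate $\|P_Z^\ell(p)\|<e^{\delta\ell}$ you obtain from closing a zero-exponent orbit. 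Your proposal skips this step and simply asserts the return map can be ``rotated'' in a flowbox; that step, as written, fails.

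Two secondary issues. First, the sentence ``the linear Poincar\'e flow itself blows up'' near $S(X_0)$ is not accurate: $P^t_X$ is perfectly well defined and continuous along regular orbits; what degenerates near a singularity is the return time and the strength of domination/hyperbolicity, which is exactly what Proposition~\ref{pr:DDLPF-singhyp} and the Lorenz-like analysis address, and what Lemma~\ref{le:nodomination} (with its ``period larger than $T(m)$'' hypothesis) is designed to absorb. Second, the claim that the density point $p$ and the resulting closed orbit can be ``chosen at a positive distance from $S(X_0)$'' has no justification: $Z$ has full measure, but orbits of points in $Z$ can (and in general do) pass arbitrarily close to the singularities; the singularity set has measure zero, which controls neither proximity of orbits nor recurrence near $S(X_0)$. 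Fortunately, the paper's argument does not need such a distance bound, because the perturbation lemmas are applied along the regular closed orbit and the control comes from the period being large, not from staying away from $S(X)$.
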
 

From Theorem~\ref{thr:open} we can follow \emph{ipsis
  verbis} the proof of Theorem 1.3 of~\cite{BesDu07} to
deduce the next generic result.

\begin{corollary}\label{cor:B}
  There exists a $C^1$ residual set
  $\mathcal{R}\subset\mathfrak{X}^{1}_{\mu}(M)$ such that if
  $X\in\mathcal{R}$, then $X$ is Anosov or else the elliptic
  closed orbits of $X$ are dense in $M$.
\end{corollary}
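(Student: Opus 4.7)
The plan is to deduce Corollary~\ref{cor:B} from Theorem~\ref{thr:open} by the standard Baire category argument. Fix a countable basis $\{U_n\}_{n\in\NN}$ of the topology of $M$. For each $n$, let $\mathcal{E}_n\subset\fX^1_\mu(M)$ be the set of vector fields possessing an elliptic closed orbit that intersects $U_n$, and let $\mathcal{A}\subset\fX^1_\mu(M)$ be the set of Anosov vector fields. I would then set
\begin{equation*}
  \mathcal{R}\,=\,\bigcap_{n\in\NN}\bigl(\mathcal{A}\cup\mathcal{E}_n\bigr),
\end{equation*}
and prove that (a) $\mathcal{A}\cup\mathcal{E}_n$ is open in $\fX^1_\mu(M)$ for every $n$ and (b) it is dense for every $n$. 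Granted these two facts, Baire's theorem gives that $\mathcal{R}$ is residual, and any $X\in\mathcal{R}$ that fails to be Anosov automatically lies in $\mathcal{E}_n$ for every $n$, so its elliptic closed orbits meet every element of the basis, hence form a dense subset of $M$.

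Density of $\mathcal{A}\cup\mathcal{E}_n$ is a direct translation of Theorem~\ref{thr:open}: given $X\in\fX^1_\mu(M)\setminus\mathcal{A}$ and $\vep>0$, Theorem~\ref{thr:open} applied to the open set $U_n$ yields $Y\in\mathcal{E}_n$ that is $\vep$-$C^1$-close to $X$. For the openness, note that $\mathcal{A}$ is $C^1$-open by the classical structural stability of Anosov flows, so it suffices to show openness of $\mathcal{E}_n$. Given $X_0\in\mathcal{E}_n$ with elliptic closed orbit $\gamma_0$ meeting $U_n$, I would take a small cross-section $\Sigma$ to $\gamma_0$ at a point of $\gamma_0\cap U_n$; the Poincar\'e first-return map $\Pi_{X_0}\colon\Sigma\to\Sigma$ is an area-preserving local diffeomorphism with a fixed point $p_0=\gamma_0\cap\Sigma$ whose derivative $D\Pi_{X_0}(p_0)$ has determinant $1$ and complex conjugate eigenvalues of modulus $1$ off the real axis, equivalently $|\mathrm{tr}\,D\Pi_{X_0}(p_0)|<2$. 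For $Y$ close to $X_0$ in the $C^1$-topology the return map $\Pi_Y$ depends continuously on $Y$, the implicit function theorem provides a unique nearby fixed point $p_Y$ which still lies in $U_n$, and the trace condition $|\mathrm{tr}\,D\Pi_Y(p_Y)|<2$ persists, so $Y\in\mathcal{E}_n$.

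The principal point of care is this openness verification, since ellipticity is a boundary phenomenon between two hyperbolic regimes and is not robust in the general (non-conservative) category; what saves the argument is precisely the incompressibility hypothesis, which forces the Poincar\'e return map along any transversal to be area-preserving, so that the eigenvalues of $D\Pi_Y(p_Y)$ are constrained to the set $\{\lambda,1/\lambda\}$ and cannot leave the unit circle without first collapsing to $\pm 1$. Once openness and density of each $\mathcal{A}\cup\mathcal{E}_n$ are established, the deduction of Corollary~\ref{cor:B} is immediate, exactly as in the proof of Theorem~1.3 of~\cite{BesDu07}.
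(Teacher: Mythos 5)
Your proof is correct and follows the same route the paper invokes: the paper simply refers the reader to the proof of Theorem~1.3 of~\cite{BesDu07}, which is precisely the Baire category argument you spell out (countable basis $\{U_n\}$, the sets $\mathcal{A}\cup\mathcal{E}_n$, openness via persistence of the elliptic orbit through the area-preserving return map, density via Theorem~\ref{thr:open}). Your added verification that $\mathcal{E}_n$ is open — using $|\mathrm{tr}\,D\Pi(p)|<2$ together with $\det D\Pi(p)=1$, so that $1$ is not an eigenvalue and the implicit function theorem applies — is exactly the point that makes the argument work in the conservative category, and matches the reference.
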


It is well known that a $C^2$ dynamical system admitting a
hyperbolic set with positive measure must be globally
hyperbolic: see e.g. Bowen-Ruelle~\cite{BR75} and
Bochi-Viana~\cite{BochVi04}. Recently in~\cite{alpi2005}
this was extended to transitive sets having a weaker form of
hyperbolicity called \emph{partial hyperbolicity} with the
extra assumption of non-uniform expansion along the central
direction. Also in~\cite{AAPP} similar results where
obtained for positive volume \emph{singular-hyperbolic} sets
for $C^2$ (not necessarily incompressible) flows.

We extend these results for an even weaker type of
hyperbolicity, i.e. for sets with a dominated splitting.
Both Theorems~\ref{mthm:anosov-zero} and~\ref{thr:open} are
deduced from the following result.

\begin{maintheorem}\label{thm:there-exists-generic}
  There exists an open and dense subset
  $\G\subset\fX^2_\mu(M)$ such that for every $X\in\G$ with
  a regular invariant set $\Lambda$ (not necessarily closed)
  satisfying:
\begin{itemize}
\item the Linear Poincar\'e Flow over $\Lambda$ has a
  dominated decomposition; and
\item $\Lambda$ has positive volume: $\mu(\Lambda)>0$;
\end{itemize}
then $X$ is Anosov and the closure of $\Lambda$ is the whole
of $M$.
\end{maintheorem}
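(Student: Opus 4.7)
The plan is to define $\G$ by a non-resonance condition on the spectra at the singularities, to use this to exclude singularities from $\overline{\Lambda}$, and then to promote the dominated splitting on the resulting compact regular invariant set to global Anosov hyperbolicity through the Bowen--Ruelle-type argument available in the $C^2$ setting. At each $\sigma\in S(X)$ the divergence-free condition forces $\operatorname{tr}(DX_\sigma)=0$, so the three eigenvalues of $DX_\sigma$ are either real and summing to zero or one real plus a complex conjugate pair. I would declare $X\in\G$ if at every singularity the three eigenvalues are real, simple, nonzero and satisfy no resonance of the explicit form that a dominated splitting near $\sigma$ would produce (made precise in Step 2 below). Openness of $\G$ is immediate from continuity of eigenvalues; density follows from standard perturbation theory for divergence-free vector fields (cf.~\cite{ArbiMath}) combined with a $C^2$-small local modification of the linear part at each singularity, which can realize any trace-zero adjustment of the spectrum.

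The heart of the argument is Step 2: $\overline{\Lambda}\cap S(X)=\emptyset$. Suppose for contradiction $\sigma\in\overline{\Lambda}\cap S(X)$ and pick $x_n\in\Lambda$ with $x_n\to\sigma$. By Grassmannian compactness, after extracting a subsequence I may assume
\begin{align*}
\frac{X(x_n)}{|X(x_n)|}\to v_\sigma,\qquad N_{x_n}\to v_\sigma^\perp=:N_\sigma,\qquad N^i_{x_n}\to N^i_\sigma\ \text{for}\ i=1,2.
\end{align*}
Using $DX^t(X(x))=X(X^t(x))$ and letting $x_n\to\sigma$ forces $v_\sigma$ to be an eigenvector of $DX_\sigma$ with some eigenvalue $\tau$. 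Writing the domination~\eqref{DS} at $x_n$ and passing to the limit, using $DX^m_{x_n}\to DX^m_\sigma$ and the uniform continuity of the orthogonal projections $O_{X^m(x_n)}$ onto planes converging to $v_\sigma^\perp$, produces an invariant splitting $N_\sigma=N^1_\sigma\oplus N^2_\sigma$ for the two-dimensional projected linear action $\Pi_\sigma\circ DX^m_\sigma$ on $v_\sigma^\perp$ satisfying
\begin{equation*}
\frac{\|\Pi_\sigma\circ DX^m_\sigma\mid N^1_\sigma\|}{\|\Pi_\sigma\circ DX^m_\sigma\mid N^2_\sigma\|}\le\tfrac12.
\end{equation*}
Since $DX_\sigma$ preserves the flag $\{0\}\subset\langle v_\sigma\rangle\subset T_\sigma M$, the eigenvalues of this projected planar cocycle are exactly the two eigenvalues of $DX_\sigma$ other than $\tau$. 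The above display therefore becomes an explicit algebraic inequality on the spectrum of $DX_\sigma$, precisely of the form that the non-resonance condition defining $\G$ rules out; this contradiction completes Step 2.

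Once $\overline{\Lambda}\cap S(X)=\emptyset$, $\overline{\Lambda}$ is a compact regular invariant set, and the dominated splitting extends continuously to it. Incompressibility gives $|\det DX^t|\equiv 1$, and on a regular orbit $\det DX^t_z=(|X(X^tz)|/|X(z)|)\det P^t_X(z)$; on the compact regular set $\overline{\Lambda}$ the quotient $|X(X^tz)|/|X(z)|$ is uniformly bounded away from $0$ and $\infty$, so $|\det P^t_X|$ is uniformly bounded. Combined with~\eqref{DS} this is the classical statement that a conservative two-dimensional cocycle with dominated splitting is uniformly hyperbolic, making $\overline{\Lambda}$ a hyperbolic set with $\mu(\overline{\Lambda})\ge\mu(\Lambda)>0$. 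The $C^2$ Bowen--Ruelle ergodic argument adapted to divergence-free flows (as in~\cite{BR75,BochVi04,AAPP}) then forces $\overline{\Lambda}=M$ and $X$ to be Anosov, which is the conclusion of the theorem.

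The hard part is Step 2: the linear Poincar\'e flow is undefined at $\sigma$ and the orthogonal projection onto the normal bundle becomes singular as the flow vector vanishes, so transferring~\eqref{DS} into a linear statement about $DX_\sigma$ requires careful control of how the normals $N_{x_n}$, the flow directions $X(x_n)/|X(x_n)|$, and the projections $O_{X^m(x_n)}$ align relative to the eigendirections of $DX_\sigma$ along the approach $x_n\to\sigma$. It is in this limit analysis that the non-resonance condition defining $\G$ earns its keep, since otherwise the dominated limit cocycle would persist at $\sigma$ and yield no contradiction.
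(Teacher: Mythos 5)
Your Step~2 is the crux, and it contains a genuine gap that cannot be repaired by sharpening the limit analysis. The Lorenz attractor is a counterexample to the local strategy: it is a compact invariant set for a $C^\infty$ vector field containing a Lorenz-like singularity, yet the set of \emph{regular} orbits inside it carries a dominated splitting for the Linear Poincar\'e Flow (this is the basic structure behind singular hyperbolicity, cf.~\cite{MPP04,AraPac07}). So there is no ``explicit algebraic inequality on the spectrum of $DX_\sigma$'' that a dominated splitting on nearby regular orbits forces and that a non-resonance condition on $\G$ could rule out; the conclusion you want to derive at the end of Step~2 is simply false as a local statement. Independently, the inference that $v_\sigma$ is an eigenvector of $DX_\sigma$ does not follow: from $DX^m_{x_n}X(x_n)=X(X^m(x_n))$ you only get $DX^m_\sigma v_\sigma = c\,w$ where $w$ is a subsequential limit of $X(X^m(x_n))/|X(X^m(x_n))|$ and $c$ a subsequential limit of $|X(X^m(x_n))|/|X(x_n)|$ (which can be $0$, $\infty$, or anything), and in a neighbourhood of a hyperbolic singularity the unit vector $X(x_n)/|X(x_n)|$ can converge to essentially any direction depending on how $x_n\to\sigma$, so $w\ne\pm v_\sigma$ in general and the normal planes $N_{x_n}$, $N_{X^m(x_n)}$ converge to \emph{different} planes, breaking the limiting cocycle picture.

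The hypothesis your Step~2 never touches is $\mu(\Lambda)>0$, and that is precisely what the paper's proof of this step hinges on. In Section~3 the paper shows (Lemma~\ref{le:singLorenz}) that under the open-dense non-resonance condition any singularity in the closure is Lorenz-like for $X$ or $-X$; but to reach a contradiction it then invokes Pesin theory on the smooth invariant probability $\mu_A$ (which is non-uniformly hyperbolic precisely because of domination plus incompressibility), proves via bounded distortion on hyperbolic blocks that $\mu$-a.e.\ strong-stable and strong-unstable manifold is contained in $A$ (Lemma~\ref{le:DDFLP-Wsinside}), uses Katok's closing lemma and the Inclination Lemma to produce a hyperbolic periodic orbit whose invariant manifolds are dense in $A$, and finally uses the Connecting Lemma to build an orbit-flip and then an inclination-flip homoclinic connection at the Lorenz-like singularity, contradicting persistence of domination (Lemma~\ref{le:persistdom}). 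None of this can be replaced by a Grassmannian limit at $\sigma$. Your Steps~1, 3 and 4 (open-dense non-resonance set, promotion of domination to uniform hyperbolicity on a compact regular set via boundedness of $|\det P^t_X|$, and the Bowen--Ruelle global argument) are all consistent with the paper, but the theorem is not proved without a correct Step~2.
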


\subsection{Overview of the arguments and organization of
  the paper}
\label{sec:descript-proof}

The proofs of Theorems~\ref{mthm:anosov-zero}
and~\ref{thr:open} follow standard arguments from
Bochi~\cite{AviBoch06},~\cite{bessa} and~\cite{BesDu07}
assuming Theorem~\ref{thm:there-exists-generic} together
with the denseness of $C^2$ incompressible flows among $C^1$
incompressible ones given by Zuppa in~\cite{zup79}.  We
present these arguments in the following
Section~\ref{sec:generic-dichot-incom}.

We give now an outline of the proof of
Theorem~\ref{thm:there-exists-generic}.  Fix $X\in
\fX^2_\mu(M)$ and assume that there exists an invariant
subset $\Lambda$ for $X$ (not necessarily compact) without
singularities (i.e. formed by regular orbits of $X$) and
with positive volume: $\mu(\Lambda)>0$.  We show that
\begin{enumerate}
\item the closure $A$ of
  $\Lambda$ cannot contain singularities.
\end{enumerate}
This is done in Section~\ref{sec:singul-hyperb} combining
arguments from the characterization of robustly transitive
attractors in~\cite{MPP04}, with properties of positive
volume invariant subsets from~\cite{alpi2005} and of
hyperbolic smooth invariant measures from \emph{Pesin's
  Theory}~\cite{Pe77,barreira-pesin2}, together with the
arguments from \cite[Appendix B]{BochVi04}.

\begin{enumerate}
\item[(2)] If $A$ is a compact invariant set without
  singularities and with dominated decomposition of the
  Linear Poincaré Flow, then $A$ is a uniformly hyperbolic
  set.
\end{enumerate}
This is a well-known result from \cite{bessa} and the work of Morales-Pacifico-Pujals in~\cite{MPP04}.

\begin{enumerate}
\item[(3)] a uniformly hyperbolic set $A$ with positive volume
  for a $C^2$ incompressible flow must be the whole $M$.
\end{enumerate}  
For the last item above we adapt the arguments
from~\cite[Appendix B]{BochVi04} to the flow setting.

\subsection*{Acknowledgments} 

V.A.  wishes to thank IMPA for its hospitality, excellent
research atmosphere and access to its superb
library. M.B. wishes to thank CMUP and the Pure Mathematics
Department of University of Porto for access to its
facilities and library during the preparation of this work.


\section{Generic dichotomies for incompressible flows}
\label{sec:generic-dichot-incom}

Here we prove Theorems~\ref{mthm:anosov-zero}
and~\ref{thr:open} assuming
Theorem~\ref{thm:there-exists-generic}.

We start with a sequence of simple lemmas.
We say that the vector field $X$ is \emph{aperiodic} if the
volume of the set of all closed orbits for the corresponding
flow is zero.

\begin{lemma}\label{Bowen2}
  There exists a $C^1$-dense set
  $\cD\subseteq\fX^1_{\mu}(M)$ such that if
  $X\in\mathcal{D}$, then
  \begin{itemize}
  \item $X$ is aperiodic;
  \item $X$ is of class $C^{r}$
    for some $r\geq{2}$;  and
  \item every invariant $m$-dominated set $\Lambda$ has zero
    or full measure, for any $m\in\NN$.
  \end{itemize}
\end{lemma}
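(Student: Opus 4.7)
The plan is to construct $\cD$ as the intersection of three $C^1$-dense subsets of $\fX^1_\mu(M)$, one for each bullet, and verify that the intersection remains $C^1$-dense. For the second bullet (higher regularity), I invoke Zuppa's theorem~\cite{zup79}, which asserts that $\fX^r_\mu(M)$ is $C^1$-dense in $\fX^1_\mu(M)$ for every $r\ge 2$; call this dense set $\cD_1$.

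For the first bullet (aperiodicity), I appeal to the Kupka--Smale theorem in the conservative setting: there is a $C^r$-residual subset of $\fX^r_\mu(M)$ (for $r\ge 2$) whose elements have only non-degenerate closed orbits, either hyperbolic of saddle type (volume preservation forces the two transverse Floquet multipliers along a closed orbit to be reciprocal) or elliptic with non-resonant Floquet multipliers. For such a vector field, closed orbits are isolated up to any fixed period bound, so the total set of closed orbits is countable and has zero $\mu$-volume; hence $X$ is aperiodic. This yields a $C^1$-dense subset $\cD_2\subseteq\cD_1$.

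For the third bullet I apply Theorem~\ref{thm:there-exists-generic}, which provides an open and dense subset $\G\subseteq\fX^2_\mu(M)$ realising the prescribed dichotomy. Let $X\in\G$ and let $\Lambda$ be an invariant $m$-dominated set; by the very definition of $m$-domination, $\Lambda$ is regular. If $\mu(\Lambda)=0$ there is nothing to prove, so suppose $\mu(\Lambda)>0$. Theorem~\ref{thm:there-exists-generic} then forces $X$ to be Anosov with $\overline{\Lambda}=M$; but a $C^2$ conservative Anosov flow is ergodic with respect to $\mu$ by Anosov's classical theorem (Hopf argument together with absolute continuity of the invariant foliations), so the invariant set $\Lambda$ has full $\mu$-volume. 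Setting $\cD:=\cD_2\cap\G$ produces the desired set: $\G$ is open and dense in $\fX^2_\mu(M)$ while $\cD_2$ is $C^1$-dense in $\fX^1_\mu(M)$, so $\cD$ is $C^1$-dense in $\fX^1_\mu(M)$.

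The main obstacle I expect is the aperiodicity step: the Kupka--Smale theorem for divergence-free vector fields rests on the Pugh--Robinson closing lemma in the volume-preserving category together with transversality arguments respecting the divergence-free constraint; once those are available the counting argument for closed orbits is routine. A secondary but completely standard ingredient is the ergodicity of $C^2$ conservative Anosov flows, which is classical.
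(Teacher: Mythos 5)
Your proposal is correct and follows essentially the same route as the paper: Robinson's Kupka--Smale theorem in the conservative setting for aperiodicity, Zuppa's theorem for $C^1$-density of $\fX^r_\mu(M)$ in $\fX^1_\mu(M)$, and Theorem~\ref{thm:there-exists-generic} for the zero-or-full-measure dichotomy. The one place where you are more careful than the paper: after concluding that $X$ is Anosov, the paper simply writes ``$\Lambda=M$'', which is not literally what Theorem~\ref{thm:there-exists-generic} gives (it gives $\overline{\Lambda}=M$); you correctly observe that one should invoke ergodicity of $C^2$ volume-preserving Anosov flows (Anosov's theorem via the Hopf argument and absolute continuity of the invariant foliations) to conclude that the invariant set $\Lambda$, having positive volume, must have full volume. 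This fills in a genuine, if small, gap in the paper's write-up, and your intersection bookkeeping ($\cD_2\cap\G$ is $C^2$-residual in $\fX^2_\mu(M)$, hence $C^2$-dense, hence $C^1$-dense there, and $\fX^2_\mu(M)$ is $C^1$-dense in $\fX^1_\mu(M)$) is sound.
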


\begin{proof} Let $\mathcal{KS}$ be the $C^{r}$ generic
  subset given by~\cite[Theorem 1(i)]{Ro70}, for some
  $r\ge2$, so that $X\in\mathcal{KS}$ is $C^{r}$ and admits
  countably many closed orbits only, all of which are
  hyperbolic or elliptic. 
  According to the results in~\cite{zup79},
  $\fX_{\mu}^{r}(M)$ is also $C^{1}$-dense on
  $\fX_{\mu}^{1}(M)$, for $r\geq 2$. 
  Therefore, we can find a set $\cD$ such that $X\in\cD$ is
  aperiodic, of class at least $C^2$ and given any
  $m$-dominated invariant subset $\Lambda$ of $M$ for $X$,
  by Theorem~\ref{thm:there-exists-generic} we have that
  either $\Lambda$ has zero volume, or $X$ is Anosov, and so
  $\Lambda=M$.
\end{proof}

We define as in~\cite{BoV02} or~\cite{bessa}, the integrated
upper Lyapunov exponent
$$L(X)={\lim_{n\to+\infty}}\int_{M}\frac{1}{n}\log\|P^{n}_{X}(x)\|d\mu(x),$$
which is an upper semicontinuous function $L:\fX_\mu^1(M)\to\RR$. 

The proof of the next result follows~\cite[Proposition
3.2]{bessa} step by step, only replacing hyperbolic
invariant subset with $m$-dominated invariant subset in the
relevant places of the argument.

\begin{proposition}\label{main}
  Let $X\in{\mathfrak{X}_{\mu}^{2}(M)}$ be a
  aperiodic vector field and assume that every
  $m$-dominated invariant subset has zero volume. 

  For every given $\epsilon,\delta>0$ there exists a
  incompressible $C^{1}$ vector field $Y$ such that $Y$ is
  $\epsilon$-$C^1$-close to $X$ and $L(Y)<\delta$.
\end{proposition}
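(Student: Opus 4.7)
The plan is to imitate the proof of \cite[Proposition 3.2]{bessa} line by line, with the single replacement of ``hyperbolic invariant set'' by ``$m$-dominated invariant set''. The underlying strategy is Bochi--Viana's: if $L(X)\ge\delta$, then on a positive-measure set the Linear Poincar\'e Flow has two distinct Lyapunov exponents, and the hypothesis that every $m$-dominated invariant set has zero volume forces the Oseledets stable and unstable lines $N^s_x,N^u_x$ to return infinitely often to configurations of very small angle. These near-collisions are exactly the opportunities that allow small divergence-free perturbations to rotate $N^u$ onto $N^s$ along a piece of orbit, producing cancellations that lower the integrated upper Lyapunov exponent.

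The first step is to fix $m=m(\epsilon)\in\NN$ so large that perturbations of size $\epsilon$ may produce the required rotations in time $m$ (as in \cite{bessa}), and then to consider the set $\Gamma_m$ of points where the $P^t_X$-orbit in the normal bundle fails the domination estimate \reff{DS}. By hypothesis $\mu(\Gamma_m)=\mu(M)$, since the complement is $m$-dominated and invariant. The second step is to invoke the flow version of Bochi's perturbation lemma from \cite[Section 3]{bessa}, built on Doering's Linear Poincar\'e Flow and the divergence-free perturbation machinery of Arbieto--Matheus~\cite{ArbiMath}: along a regular orbit segment that fails domination one can realize, by a $C^1$-small incompressible perturbation supported in a thin flow box, an arbitrarily prescribed small rotation of the Poincar\'e section that interchanges (a definite fraction of) the stable and unstable directions. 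Aperiodicity of $X$ is essential here, since the flow boxes must be chosen pairwise disjoint under the flow over a long time interval, which is only possible on a full-measure subset of non-recurrent (in finite time) orbits.

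The third step is the standard exhaustion: fix $N$ large and select a finite family of pairwise flow-disjoint small flow boxes covering a subset of $\Gamma_m$ of measure $>1-\delta/(2\log\|DX\|_\infty)$, with return-time gaps at least $N$. Concatenating the individual perturbations from step two produces a single divergence-free $Y$, $C^1$-$\epsilon$-close to $X$, such that for each orbit crossing the union of boxes the Birkhoff average $\tfrac{1}{N}\log\|P^N_Y(x)\|$ is smaller than $\delta/2$; on the remaining piece of $M$ we control the average by its uniform bound times a factor $<\delta/(2\log\|DX\|_\infty)$. Integrating over $M$ gives $\int \tfrac1N\log\|P^N_Y\|\,d\mu<\delta$, and subadditivity of $\log\|P^n_Y\|$ together with a diagonal argument yields $L(Y)<\delta$, as desired.

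The main obstacle is precisely the one already solved in \cite{bessa,BoV02,BochVi04}: constructing, in a conservative category, $C^1$-small perturbations whose net effect on the Linear Poincar\'e cocycle is a prescribed rotation, and gluing them over disjoint flow boxes without increasing the $C^1$-norm beyond $\epsilon$ nor destroying incompressibility. In our setting this goes through without real change because all perturbations are supported in flow boxes contained in $M\setminus S(X)$ (the set $\Gamma_m$ is automatically regular), so the presence of singularities of $X$ plays no role in the local perturbation step, and the only substantive modification with respect to \cite[Proposition 3.2]{bessa} is the use of ``$\mu(\Gamma_m)=1$ whenever every $m$-dominated set has zero volume'' in place of the analogous statement for hyperbolic sets.
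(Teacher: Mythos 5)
Your proposal is correct and coincides with the paper's own treatment: the paper does not spell out an independent proof of Proposition~\ref{main}, stating only that it follows \cite[Proposition~3.2]{bessa} step by step with ``hyperbolic invariant subset'' replaced by ``$m$-dominated invariant subset'', which is precisely your plan. Your expanded sketch of the Bochi--Viana mechanism (full-measure failure of domination, conservative rotations in pairwise flow-disjoint boxes justified by aperiodicity, and the tower/subadditivity estimate yielding $L(Y)<\delta$) is a faithful account of what the cited proof does, so there is no divergence in approach to report.
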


\begin{proof}[Proof of Theorem~\ref{mthm:anosov-zero}]
  Let $\mathcal{D}$ be given by Lemma~\ref{Bowen2}. Denote
  by $\mathcal{A}$ the $C^r$-stable subset of Anosov
  incompressible flows. By upper semicontinuity of $L$, for
  every $k\in{\mathbb{N}}$, the set
  $\cA_{k}=\{ X\in\fX_{\mu}^{1}(M)\colon L(X)<1/k\}$
is open. Then Proposition~\ref{main} implies that
$\mathcal{A}_{k}$ dense in the complement $\cA^{c}$ of $\cA$
in $\fX_{\mu}^1(M)$. We define a $C^{1}$ residual
set by
$$
\cR={\bigcap_{k\in\NN}}\big(\cA\cup \cA_{k}\big).
$$
It is straightforward to check that $\cR$ satisfies the
statement of Theorem~\ref{mthm:anosov-zero}.
\end{proof}

Now we start the proof of Theorem~\ref{thr:open}. But first
we recall a basic result which is a consequence of the
persistence of dominated splittings, see
e.g.~\cite{BDV2004}.

\begin{lemma}\label{le:persistdom}
  Given a subset $\Lambda$ with $m$-dominated splitting for
  a vector field $X$, there exists a neighborhood $U$ of
  $\Lambda$ and $\delta>0$ such that the set
  $\Lambda_{Y}(U):=\cap_{t\in\RR}Y^t(U)$ has a
  $(m+1)$-dominated splitting for any vector field $Y$ which
  is $\delta$-$C^1$-close to $X$.
\end{lemma}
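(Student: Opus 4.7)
The plan is to deploy the classical cone-field technique for persistence of dominated splittings, adapted to the Linear Poincar\'e Flow on the normal bundle.

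\textbf{Step 1 (extension of the splitting).} Dominated splittings are continuous on the Grassmannian of the bundle over which they are defined, so $N^1\oplus N^2$ extends continuously from $\Lambda$ to its closure $\overline{\Lambda}$ (intersected with $M\setminus S(X)$ if needed). Using tubular coordinates along the orbits, I extend $N^1,N^2$ continuously but non-invariantly to a small open neighborhood $V$ of $\overline{\Lambda}$ inside the regular set of $X$, and build two transverse continuous cone fields $\mathcal{C}^1,\mathcal{C}^2$ on the normal fibres around these extended subbundles, of sufficiently small aperture.

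\textbf{Step 2 (strict cone invariance for $X$ and persistence for $Y$).} The inequality $\|P^m_X(x)\mid N^1\|/\|P^m_X(x)\mid N^2\|\le 1/2$ on $\overline{\Lambda}$ translates, after shrinking the apertures and $V$ if necessary, into the usual strict cone invariance: $P^m_X(x)$ sends $\mathcal{C}^2(x)$ into the interior of $\mathcal{C}^2(X^m(x))$ and $(P^m_X(x))^{-1}$ sends $\mathcal{C}^1(X^m(x))$ into the interior of $\mathcal{C}^1(x)$, for every $x$ whose $X$-orbit segment on $[0,m]$ lies in $V$. The map $Z\mapsto P^m_Z(x)$ is $C^0$-continuous at $X$ in the $C^1$ topology on $\fX^1_\mu(M)$, uniformly on the compact closure $\overline{U}$ of a smaller neighborhood $U\subset V$ of $\overline{\Lambda}$ still bounded away from $S(X)$. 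Hence there exists $\delta>0$ such that for every $Y\in\fX^1_\mu(M)$ with $\|Y-X\|_{C^1}<\delta$, the cocycle $P^m_Y(x)$ satisfies the same strict cone invariance on every orbit segment staying in $U$.

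\textbf{Step 3 (splitting and $(m+1)$-domination).} For $x\in\Lambda_Y(U)$, the standard intersection construction $E^2_Y(x):=\bigcap_{n\ge 0}P^{nm}_Y(Y^{-nm}(x))\big(\mathcal{C}^2(Y^{-nm}(x))\big)$ and its symmetric backward analogue $E^1_Y(x)$ produce a $P_Y$-invariant continuous splitting $N_x=E^1_Y(x)\oplus E^2_Y(x)$. Strict contraction of the cones yields $\|P^m_Y(x)\mid E^1_Y\|/\|P^m_Y(x)\mid E^2_Y\|\le c(\delta)$ with $c(\delta)\to 1/2$ as $\delta\to 0$, so $c(\delta)\le 1/2+\eta$ for $\eta$ arbitrarily small. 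Composing with one further step $P^1_Y$ multiplies this ratio by at most $K:=\sup_{x\in\overline{U}}\|P^1_Y(x)\|\cdot\|(P^1_Y(x))^{-1}\|$, which is uniformly bounded for all $Y$ that are $C^1$-close to $X$ on the compact set $\overline{U}\subset M\setminus S(X)$. Choosing $\delta$ and the apertures small enough at the outset, the ratio at time $m+1$ is therefore $\le 1/2$, giving the desired $(m+1)$-dominated splitting.

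\textbf{Main obstacle.} The only delicate point is to confine the construction to a compact region inside the regular set, where the Linear Poincar\'e Flow and its $C^1$-perturbations remain well-defined and admit uniform continuity estimates; near singularities the normalization in the definition of $P^t_Z$ degenerates and the cone-field recipe breaks down. If $\overline{\Lambda}$ should meet $S(X)$ one must first localize on the regular portion, but in the applications of this lemma in the paper $\Lambda$ is automatically bounded away from singularities, so the argument proceeds as described.
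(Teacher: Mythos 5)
The paper does not actually prove this lemma: it records it as ``a basic result which is a consequence of the persistence of dominated splittings'' and cites the Bonatti--D\'iaz--Viana book \cite{BDV2004}. Your cone-field approach is the standard route, and Steps~1 and~2 (continuous extension to a neighborhood, strict cone invariance for $P_X^m$, and its persistence to $P_Y^m$ for $Y$ $C^1$-close to $X$ on orbit segments staying in $U\subset M\setminus S(X)$) are correct; you also correctly flag that everything must be confined to a compact region away from singularities.

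Step~3, however, contains a genuine error. After the cone argument you get $\|P^m_Y(x)\mid E^1_Y\|/\|P^m_Y(x)\mid E^2_Y\|\le c(\delta)$ with $c(\delta)\to 1/2$ as $\delta\to 0$, so the ratio at time $m$ for $Y$ is only $\le 1/2+\eta$ with $\eta$ small but positive. You then compose with one more step and claim the ratio at time $m+1$ drops to $\le 1/2$ because it is bounded by $K\cdot(1/2+\eta)$. But $K=\sup_{x}\|P^1_Y(x)\|\cdot\|(P^1_Y(x))^{-1}\|$ is a condition number, so $K\ge 1$; multiplying by $K$ cannot decrease the ratio, and typically $K\cdot(1/2+\eta)>1/2$. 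Shrinking $\delta$ and the cone apertures makes $\eta$ small but leaves $K$ essentially unchanged (it depends on $X$ on $\overline U$, not on the perturbation size), so the inequality $K(1/2+\eta)\le 1/2$ cannot be arranged. The correct way to recover the constant $1/2$ is to \emph{iterate} the $m$-step estimate: since the perturbed ratio at time $m$ is $\le 1/2+\eta<1$, at time $km$ it is $\le (1/2+\eta)^k$, which falls below $1/2$ already for $k=2$ when $\eta<\sqrt{1/2}-1/2$; intermediate times are then controlled by the uniform bound on $\|P^s_Y\|\cdot\|(P^s_Y)^{-1}\|$ over $0\le s\le m$. This yields $m'$-domination with $m'$ a uniform multiple of $m$, not literally $m+1$. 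In fact the precise claim ``$(m+1)$-dominated'' in the statement should be read loosely as ``$m'$-dominated for some $m'$ depending only on $X$, $\Lambda$ and $m$''; this is what the remark after the lemma in the paper (constant becomes $1/2+\epsilon$ at time $m$) actually conveys, and it is all that is used later. So either weaken the conclusion of Step~3 to ``$m'$-dominated for a uniform $m'\ge m$'' and iterate the cocycle, or add the (unjustified) hypothesis that the one-step cocycle of $X$ over $\overline U$ has condition number $\le 1$, which is not what the definition of $m$-domination gives you.
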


This means that perturbing the original flow $X$ to $Y$
around an invariant $m$-dominated set, we can in \eqref{DS}
switch from $1/2$ to $1/2+\epsilon$ for a very small
$\epsilon$ and for every regular orbit of $Y$ which remains
nearby $\Lambda$.

The following perturbation lemmas from~\cite{BesDu07} are
the main tools in our arguments to prove
Theorem~\ref{thr:open}.

\begin{lemma}{(Small angle perturbation~\cite[Proposition
    3.8]{BesDu07})}
  \label{le:smallangle}
  Let $X\in \fX^{1}_{\mu}(M)$ and $\epsilon>0$ be
  given. There exists $\theta=\theta(\epsilon,X)>0$ such
  that if a hyperbolic periodic orbit $\cO$ for $X$
  has angle between its stable and unstable
  directions smaller than $\theta$, then we can find an
  $\epsilon$-$C^1$-close volume-preserving vector field $Y$
  such that $\cO$ is an elliptic periodic orbit for $Y^t$.
\end{lemma}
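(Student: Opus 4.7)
The plan is to reduce the statement to a perturbation of the linear first return map at one point of $\cO$, and then lift the linear perturbation to a vector field perturbation via a conservative Franks-type lemma. Pick $p\in\cO$ and take a small $2$-dimensional section $\Sigma$ transverse to $X$ at $p$. Because $X^t$ preserves $\mu$, the first return map $P_X:\Sigma\to\Sigma$ is area-preserving, fixes $p$, and its derivative $A:=dP_X(p)$ has real eigenvalues $\lambda,\lambda^{-1}$ (with $|\lambda|>1$) whose eigendirections $E^s_p,E^u_p\subset T_p\Sigma$ make the angle $\alpha<\theta$.

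The heart of the proof is a planar calculation. In an orthonormal basis $(e_1,e_2)$ of $T_p\Sigma$ with $e_1$ spanning $E^s_p$ and $E^u_p$ along $\cos(\alpha)e_1+\sin(\alpha)e_2$, a direct computation yields
\[
A=\begin{pmatrix}\lambda & (\lambda^{-1}-\lambda)\cot\alpha\\ 0 & \lambda^{-1}\end{pmatrix}.
\]
For the rotation $R_\beta$ of angle $\beta$, the product $R_\beta A$ remains area-preserving and satisfies
\[
\mathrm{tr}(R_\beta A)=(\lambda+\lambda^{-1})\cos\beta-(\lambda-\lambda^{-1})\cot\alpha\,\sin\beta.
\]
Since $\mathrm{tr}(A)=\lambda+\lambda^{-1}>2$ and the coefficient of $\sin\beta$ blows up as $\alpha\to 0$, solving for the smallest $\beta>0$ with $\mathrm{tr}(R_\beta A)=2$ gives, to leading order in $\alpha$, $\beta\simeq\frac{\lambda-1}{\lambda+1}\tan\alpha$, so $|\beta|\le\alpha$ uniformly in $\lambda$. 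Taking $\beta$ slightly above this threshold produces $|\mathrm{tr}(R_\beta A)|<2$, i.e.\ $R_\beta A$ is elliptic, with $|\beta|\le 2\alpha$.

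The final step is to realize this linear perturbation as a $C^1$-small perturbation of a divergence-free vector field. For that I would invoke the conservative Franks-type lemma for $3$-flows established in \cite{BesDu07}: given $\epsilon>0$ and $X$, there exists $\eta=\eta(\epsilon,X)>0$ such that any area-preserving $\eta$-perturbation of the derivative of the first return map along a periodic orbit can be realized as the first return derivative of an incompressible $Y\in\fX^1_\mu(M)$ which is $\epsilon$-$C^1$-close to $X$, coincides with $X$ outside a small tubular neighborhood of $\cO$, and still has $\cO$ as a periodic orbit. Setting $\theta:=\min\{\alpha_0,\eta(\epsilon,X)/2\}$ with $\alpha_0$ small enough to validate the previous estimates, this yields $Y$ with $dP_Y(p)=R_\beta A$ elliptic, so $\cO$ is an elliptic closed orbit for $Y^t$.

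The main obstacle is this last step. Realizing a prescribed area-preserving perturbation of the linear Poincar\'e cocycle along a closed orbit as a bona fide perturbation of an \emph{incompressible} vector field, with controlled $C^1$-size, is delicate precisely because the volume-preserving constraint is global and generic cocycle perturbations need not come from vector field perturbations. This is exactly what the conservative Franks lemma of \cite{BesDu07} was designed to handle; the two preceding steps are essentially elementary linear algebra once that tool is available.
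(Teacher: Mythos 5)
The paper does not prove this lemma: it is imported verbatim from \cite[Proposition 3.8]{BesDu07}, so there is no internal argument to compare against. That said, your reconstruction identifies exactly the two ingredients one expects such a proof to rest on, namely (i) the elementary $SL(2,\RR)$ observation that a hyperbolic matrix whose eigendirections make a small angle $\alpha$ becomes elliptic after composition with a rotation of angle $O(\alpha)$, uniformly in the eigenvalue, and (ii) a conservative Franks-type realisation lemma turning a small area-preserving perturbation of the Poincar\'e return derivative along $\cO$ into a $C^1$-small divergence-free perturbation of $X$ supported near the orbit. Your trace computation is correct and the leading-order estimate $\beta\simeq\frac{\lambda-1}{\lambda+1}\tan\alpha$ does yield a bound on $\beta$ independent of $\lambda$, which is the heart of the matter.

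A few points you should tighten. First, a labeling slip: with $e_1$ spanning $E^s_p$ the $(1,1)$ entry of $A$ should be the contracting eigenvalue $\lambda^{-1}$, not $\lambda$; this does not affect the trace formula but the matrix as written is for $e_1\in E^u_p$. Second, you only treat $\lambda>1$; the case of negative eigenvalues (orientation-reversing return map, $\lambda<-1$) needs the symmetric computation with target trace $-2$. Third, and most substantively, the phrase ``any area-preserving $\eta$-perturbation of the derivative of the first return map'' hides a real subtlety: Franks-type lemmas control perturbations of the time-one cocycle $P_X^1$ step by step, not of the full return $P_X^\ell$ at once, and the two are comparable only after you either spread $R_\beta$ over many unit time steps or argue that composing $R_\beta$ with a single time-one transition keeps the additive error bounded by $\eta\sup\|P_X^1\|$. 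You should also say something about periodic orbits of very small period, where there is no room to spread the perturbation and the normalisation of the Franks lemma must be checked; this is precisely the kind of detail on which one cannot simply ``invoke'' the lemma from~\cite{BesDu07} without quoting its exact hypotheses.
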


Another setting where one can create a nearby elliptic
periodic orbit is the following.

\begin{lemma}{(Large angle perturbation~\cite[Proposition
    3.13]{BesDu07})}
  \label{le:nodomination}
  Let $X\in \fX^{1}_{\mu}(M)$ and $\epsilon,\theta>0$ be
  given. There exists $m=m(\epsilon,\theta)\in\NN$ and
  $T(m)>0$ such that if $\cO$ is a hyperbolic periodic orbit
  for $X$ with
\begin{itemize}
\item angle between its stable and unstable directions
  bounded from below by $\theta$;
\item period larger than $T(m)$, and
\item the Linear Poincar\'e Flow along $\cO$ is not
  $m$-dominated,
\end{itemize}
then we can find a $\epsilon$-$C^1$-close vector field $Y$
such that $\cO$ is an elliptic periodic orbit for $Y^t$.
\end{lemma}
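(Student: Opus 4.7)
The plan is a Bochi--Ma\~n\'e style construction: use the failure of $m$-domination along $\cO$ to rotate the stable and unstable normal directions of the Linear Poincar\'e Flow into each other via a sequence of infinitesimal divergence-free perturbations, each realized through the Arbieto--Matheus conservative pasting lemma. The lower bound $\theta$ on the angle between $N^s$ and $N^u$ supplies the budget of rotation needed to interchange the two directions, while the lower bound $T(m)$ on the period of $\cO$ guarantees enough pairwise disjoint flow-boxes along $\cO$ to distribute this rotation in pieces of individual size compatible with the $C^1$-bound $\epsilon$.

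First, I would fix $m=m(\epsilon,\theta)$ via the quantitative Bochi criterion: if the splitting $N^s\oplus N^u$ along $\cO$ were $m$-dominated, then any $\epsilon$-$C^1$-small perturbation supported in a flow-box would leave the splitting almost invariant, so no composition of such perturbations could ever swap the two directions. Contrapositively, the failure of $m$-domination yields some $z_0\in\cO$ at which $\|P^m_X(z_0)\mid N^s\|/\|P^m_X(z_0)\mid N^u\|>1/2$; at such a point an infinitesimal rotation of $N^u$ toward $N^s$ survives the next $m$ iterates of $P_X^t$ without being reabsorbed by the hyperbolic dynamics. Stringing together $O(1/\epsilon)$ such rotations along the orbit --- which fits because the period exceeds $T(m)$ --- one obtains a divergence-free $Y$, $\epsilon$-$C^1$-close to $X$, whose Linear Poincar\'e return map over $\cO$ is a deformation of the original hyperbolic map that carries the $X$-stable line onto the $X$-unstable line.

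Because $Y$ is incompressible, this return map is an area-preserving linear automorphism of the two-dimensional normal plane $N_{z_0}$, and the perturbations can be tuned so that its trace has absolute value strictly less than $2$; its eigenvalues are therefore complex conjugates on the unit circle, and $\cO$ is an elliptic closed orbit for $Y^t$. The principal obstacle, and the reason all three hypotheses are indispensable, lies in the second paragraph: one must coordinate the many small flow-box perturbations so that the infinitesimal rotations coherently add up rather than being undone by the intervening hyperbolic dynamics. Organizing this accumulation, using the non-domination at $z_0$ as a single seed that is amplified by the angular lower bound $\theta$ and the long period into a global tilt of angle $\theta$, is the delicate quantitative heart of the argument, made rigorous by the flow-adapted version of the Bochi--Ma\~n\'e perturbation machinery.
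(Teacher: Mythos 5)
The paper does not prove this lemma itself; it is imported verbatim from \cite[Proposition~3.13]{BesDu07}. Your sketch captures the Bochi--Ma\~n\'e rotation mechanism that underlies the proof in that reference: the absence of $m$-domination provides a point along the orbit where a tiny rotation of the normal directions is not reabsorbed by the hyperbolicity over the next $m$ time units, the long period provides enough pairwise disjoint flow-boxes to iterate this, and a conservative realization lemma converts the linear rotations into $C^1$-small divergence-free vector-field perturbations. One quibble about the roles of the hypotheses: $\theta$ is not a ``budget of rotation'' in the way you describe. If the angle between $N^s$ and $N^u$ were small, a \emph{single} small perturbation would already suffice to make $\cO$ elliptic --- that is exactly Lemma~\ref{le:smallangle}, and the two lemmas are complementary. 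Here the lower bound $\theta$ is instead what makes the quantitative bookkeeping work: it guarantees that the change of frame sending $N^s\oplus N^u$ to an orthogonal splitting has operator norm (and inverse norm) bounded only in terms of $\theta$, so that the infinitesimal rotations you insert can actually be realized by vector-field perturbations of $C^1$-size $\epsilon$ independent of the orbit, and so that the number $m(\epsilon,\theta)$ of iterates needed for one non-dominated block to support one useful rotation can be fixed in advance. With that correction your outline matches the argument of \cite{BesDu07}.
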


Conversely the absence of elliptic periodic orbits for all
nearby perturbations implies uniform bounds on hyperbolic
orbits with big enough period. This is an easy consequence
of the two previous Lemmas~\ref{le:smallangle}
and~\ref{le:nodomination} which we state for future
reference.

\begin{lemma}
  \label{le:farfromelliptic}
  Let $X\in \fX^{1}_{\mu}(M)$ and $\epsilon>0$ be given and
  set $\theta=\theta(\epsilon,X)$, $m=m(\epsilon,\theta)$
  and $T=T(m)$ given by Lemmas~\ref{le:smallangle}
  and~\ref{le:nodomination}. 

  Assume that all divergence-free vector fields $Y$ which
  are $\epsilon$-$C^1$-close to $X$ do not admit elliptic
  closed orbits. Then for every such $Y$ all closed orbits
  with period larger than $T$ are hyperbolic, $m$-dominated
  and with angle between its stable and unstable directions
  bounded from below by $\theta$.
\end{lemma}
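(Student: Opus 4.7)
The plan is to prove the statement by contrapositive, using the two preceding perturbation lemmas directly for two of the three conclusions and a short additional perturbation for the third. Let $Y$ be a divergence-free vector field $\epsilon$-$C^1$-close to $X$ and let $\cO$ be a closed orbit of $Y^t$ with period larger than $T$. I will argue that if $\cO$ fails to be hyperbolic, or fails the angle lower bound, or fails to be $m$-dominated, then I can exhibit a further $C^1$-small divergence-free perturbation $Y'$ of $Y$, still within the $\epsilon$-ball of $X$, for which $\cO$ becomes an elliptic closed orbit, contradicting the standing hypothesis on $X$. A preliminary bookkeeping step is to tighten the parameters by a constant factor (working with $\epsilon/2$ in the role of $\epsilon$) so that a single choice of $\theta, m, T$ produced by Lemmas~\ref{le:smallangle} and~\ref{le:nodomination} serves uniformly for every $Y$ in the $\epsilon$-ball around $X$; this is possible because the constants in those lemmas depend only on $C^1$-bounds of the flow, and these bounds are uniform on such a ball.

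The hyperbolic branch of the argument is almost immediate. If $\cO$ is hyperbolic but the angle between its stable and unstable directions is smaller than $\theta$, I apply Lemma~\ref{le:smallangle} to $Y$ to produce $Y'$ that is $C^1$-close to $Y$ and for which $\cO$ becomes elliptic; by the triangle inequality $Y'$ is still $\epsilon$-$C^1$-close to $X$, contradicting the hypothesis. If instead $\cO$ is hyperbolic, has angle bounded below by $\theta$, but the Linear Poincar\'e Flow along $\cO$ is not $m$-dominated, then since the period of $\cO$ exceeds $T=T(m)$ the assumptions of Lemma~\ref{le:nodomination} are met and yield the same elliptic perturbation and the same contradiction.

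The remaining case, when $\cO$ is not hyperbolic at all, is the one I expect to be the main obstacle, as it is not explicitly covered by either of the two perturbation lemmas. Because $Y\in\fX^1_\mu(M)$ and $M$ is three-dimensional, the derivative of the Poincar\'e return map along $\cO$ is an area-preserving linear map on the two-dimensional normal section and therefore lies in $SL(2,\RR)$; its eigenvalues are either real reciprocal (hyperbolic) or lie on the unit circle (elliptic, or parabolic when both equal $\pm 1$). The hypothesis forbids the elliptic alternative for $Y$ itself, so a non-hyperbolic $\cO$ must be parabolic. I would then perform a localized divergence-free perturbation supported in a small flow-box around $\cO$, of the same Franks-type family used in \cite{BesDu07} for the proofs of Lemmas~\ref{le:smallangle} and~\ref{le:nodomination}, by composing the return map with an arbitrarily small rotation in the normal section; this moves the double eigenvalue $\pm1$ to a nearby complex conjugate pair on the unit circle while keeping the perturbation incompressible and $C^1$-small. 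The resulting $Y'$ is $\epsilon$-$C^1$-close to $X$ and has $\cO$ as an elliptic closed orbit, which closes the contradiction and completes the three cases.
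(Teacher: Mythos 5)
The paper states Lemma~\ref{le:farfromelliptic} without proof, merely remarking that it ``is an easy consequence of the two previous Lemmas.'' Your proposal is therefore a reconstruction rather than a comparison, and on the whole it is a sound one: the cases of small angle and of failure of $m$-domination are exactly the contrapositives of Lemmas~\ref{le:smallangle} and~\ref{le:nodomination}, and your bookkeeping remark about shrinking $\epsilon$ (and arranging uniformity of $\theta,m,T$ over the ball around $X$, so that the lemmas can be invoked at $Y$ rather than at $X$) is a legitimate technical point that the paper glosses over.

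What you do genuinely add is the third case, a closed orbit of $Y$ that is \emph{not hyperbolic}. Neither Lemma~\ref{le:smallangle} nor Lemma~\ref{le:nodomination} applies there since both take a hyperbolic orbit as hypothesis, so something extra is needed to justify the word ``hyperbolic'' in the conclusion. You correctly observe that in a conservative $3$-flow the derivative of the Poincar\'e return map lies in $SL(2,\RR)$, so a non-hyperbolic, non-elliptic orbit is parabolic, and that a parabolic orbit can be made elliptic by a $C^1$-small Franks-type divergence-free perturbation in a flow box. One small caution in that last step: composing the return map with a rotation $R_\alpha$ does not push the trace below $2$ for both signs of $\alpha$ (a direct computation with $\begin{pmatrix}1&1\\0&1\end{pmatrix}$ gives trace $2\cos\alpha+\sin\alpha$, which exceeds $2$ for small $\alpha>0$), so you need to choose the sign of the rotation appropriately, or equivalently argue that one of the two shears composed with the parabolic block lands in the elliptic region. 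With that minor fix the argument is complete; you have in effect supplied the proof the paper omits.
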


\begin{proof}[Proof of Theorem~\ref{thr:open}]
  Let $\cP$ be the residual set given by Pugh's General
  Density Theorem in~\cite{PughRobin83}, that is $\cP$ is
  the family of all divergence-free vector fields $X$ such
  that $\Omega(X)$ is the closure of the set of periodic
  orbits, all of them hyperbolic or elliptic, and
  $\Omega(X)=M$ by the Poincaré Recurrence Theorem.

  We take any $X\in \fX^{1}_{\mu}(M)$ which is not
  approximated by an Anosov flow. Then by a small $C^1$
  perturbation we can assume that $X$ belongs to $\cP$ and
  that $X$ is still \emph{not} approximated by an Anosov
  flow.  We fix some open set $U$ and $\epsilon>0$.

  If some elliptic closed orbit of $X$ intersects $U$ there
  is nothing to prove, just set $Y=X$.  Otherwise we fix
  $\epsilon>0$ small and consider three cases:
\begin{enumerate}
\item [(A)] All closed orbits of $X$ which intersect $U$ are
  hyperbolic, and some of them has a small angle, less than
  $\theta=\theta(\epsilon,X)$ provided by
  Lemma~\ref{le:smallangle}, between the stable and unstable
  directions.
\item [(B)] All closed orbits of $X$ which intersect $U$ are
  hyperbolic, with angle between stable and unstable
  directions bounded from bellow by $\theta$, but some of
  them, with period larger than $T$, do not admit any
  $m$-dominated splitting for the Linear Poincar\'e Flow,
  where $m=m(\epsilon,\theta)$ and $T=T(m)$ are given by
  Lemma~\ref{le:nodomination}, and
  $\theta=\theta(\epsilon,X)$ was given as before by
  Lemma~\ref{le:smallangle}.
\item [(C)] All closed orbits of $X$ which intersect ${U}$
  and have period larger than $T$ are hyperbolic, with
  $m$-dominated splitting, and with the angle between the
  stable and unstable directions bounded from bellow by
  $\theta$, where $m=m(\epsilon,\theta)$ and $T=T(m)$ are given by
  Lemma~\ref{le:nodomination}, and
  $\theta=\theta(\epsilon,X)$ was given as before by
  Lemma~\ref{le:smallangle}.
\end{enumerate}

Case (A) implies the desired conclusion for some
zero-divergence vector field $Y$ $\epsilon$-$C^1$-close to
$X$ by Lemma~\ref{le:smallangle}.  Analogously for case (B)
by the choice of the bounds $m$, $T$ and by
Lemma~\ref{le:nodomination}.

Finally, we use Theorem~\ref{mthm:anosov-zero} to show that
if $X$ is in case (C) and we assume that every $C^1$-nearby
vector field $Y$ does not admit elliptic periodic orbits
through $U$, then we get a contradiction. This proves the
statement of Theorem~\ref{thr:open}.

\medskip

If $X$ is in case (C), then from
Lemma~\ref{le:farfromelliptic} we know that every periodic
orbit intersecting $U$, for every vector field $Y$
$\epsilon$-$C^1$-close to $X$, with period larger than $T$,
is hyperbolic with uniform bounds on $m$ and $\theta$.

From Theorem~\ref{mthm:anosov-zero}, since $X$ is not
approximated by an Anosov flow, there exists an
incompressible vector field $Y$, which is
$\epsilon/3$-$C^1$-close to $X$, admitting a full
$\mu$-measure subset $\Z$ where all Lyapunov exponents for
$Y$ are zero. Moreover we can assume that $Y$ is aperiodic,
that is the set of all periodic orbits has volume zero.

Let $\hat{U}\subset U$ be a measurable set with positive
measure. Let $R\subset \hat{U}$ be the set given by
Poincar\'e Recurrence Theorem (see e.g. \cite{Man87}) with
respect to $Y$. Then every $x\in R$ returns to $\hat{U}$
infinitelly many times under the flow $Y^t$ and is not a
periodic point. Denote by $\cT$ the set of positive return
times to $\hat{U}$ under $Y^t$.

Given $x\in \Z\cap R$ and $0<\delta<\log2/m$, there
exists $t_{x}\in\RR$ such that $$e^{-\delta
  t}<\|P_{Y}^{t}(x)\|<e^{\delta t} \text{ for every }t\geq
t_{x}.$$

Let us choose $\tau\in\cT$ such that $\tau>
\max\{t_{x},T\}$.

The $Y^t$-orbit of $x$ can be approximated for a very long
time $\tau>0$ by a periodic orbit of a $C^1$-close flow $Z$:
given $r,\tau>0$ we can find a
$\epsilon/3$-$C^1$-neigh\-bor\-hood $\cU$ of $Y$ in
$\fX^1_\mu(M)$, a vector field $Z\in\mathcal{U}$, a periodic
orbit $p$ of $Z$ with period $\ell$ and a map
$g:[0,\tau]\to[0,\ell]$ close to the identity such that
 \begin{itemize}
 \item $\dist\big( Y^t(x),Z^{g(t)}(p)\big)<r$ for all
   $0\le t\le \tau$;
 \item $Z=Y$ over $M\setminus\bigcup_{0\le t\le \ell}\big(
   B(p,r)\cap B(Z^t(p),r)\big)$.
 \end{itemize}
 This is Pugh's $C^1$ Closing Lemma adapted to the setting
 of conservative flows, see ~\cite{PughRobin83}.
 Letting $r>0$ be small enough we obtain also that
 \begin{align}\label{eq:almostid}
   e^{-\delta \ell}<\|P_{Z}^{\ell}(p)\|<e^{\delta \ell}
   \quad\text{with}\quad \ell> T. 
 \end{align}
 Now it is easy to see that $Z$ is $\epsilon$-$C^1$-close to
 $X$, so that the orbit of $p$ under $Z$ satisfies the
 conclusion of Lemma~\ref{le:farfromelliptic}. In particular
 we have that
 \begin{align*}
   \frac{\big\| DP_Z^m\mid N_x^s\big\|}
   {\big\| DP_Z^m\mid N_x^u\big\|}
   \le \frac12 \quad\text{for all}\quad
   x\in\cO_Z(p),
 \end{align*}
 for otherwise we would use Lemma~\ref{le:nodomination} and
 produce an elliptic periodic orbit for a flow
 $\epsilon$-$C^1$-close to $X$. Since the subbundles
 $N^{s,u}$ are one-dimensional we write $p_i:=Z^{im}(p)$ for
 $i=0,\dots, [\ell/m]$ with $[t]:=\max\{k\in\ZZ:k\le t\}$ and
 \begin{align}\label{eq:dominationend}
   \frac{\big\| DP_Z^\ell\mid N_p^s\big\|}
   {\big\| DP_Z^\ell\mid N_p^u\big\|}
   =
   \frac{\big\| DP_Z^{\ell-m\cdot[\ell/m]}\mid N_p^s\big\|}
   {\big\| DP_Z^{\ell-m\cdot[\ell/m]}\mid N_p^u\big\|}
   \cdot\prod_{i=0}^{[\ell/m]}
   \frac{\big\| DP_Z^m\mid N_{p_i}^s\big\|}
   {\big\| DP_Z^m\mid N_{p_i}^u\big\|}
   \le
   C(p,Z)\cdot\left(\frac12\right)^{[\ell/m]},
 \end{align}
 where $C(p,Z)=\sup_{0\le t\le m}\big(\| DP_Z^t\mid N_{p}^s\|\cdot \|
 DP_Z^t\mid N_{p}^u\|^{-1}\big)$ depends continuously on $Z$ in the
 $C^1$ topology.  There exists then a uniform bound on
 $C(p,Z)$ for all vector fields $Z$ which are $C^1$-close to
 $X$.

 We note that we can take $\ell>T$ arbitrarily big by
 letting $r>0$ be small enough in the above
 arguments. Therefore \eqref{eq:dominationend} ensures that
 $\|DP_Z^\ell(p)\|=\|DP_Z^\ell\mid N^u_p\|$ and also
 \begin{align*}
   \frac1{\ell}\log\big\| DP_Z^\ell\mid N_p^s\big\|
   \le \frac1{\ell}\log C(p,Z)
   +\frac{[\ell/m]}{\ell}\log\frac12
   +\frac1{\ell}\log\big\| DP_Z^\ell\mid N_p^u\big\|.
 \end{align*}
 Moreover since  $Z$ is volume preserving we have that the
 sum of the Lyapunov exponents along $\cO_Z(p)$ is zero,
 that is (we recall that $\ell$ is the period of $p$)
 \begin{align*}
   \frac1{\ell}\log\| DP_Z^\ell\mid N_p^s\|=-\frac1{\ell}\log\|
   DP_Z^\ell\mid N_p^u\|.
 \end{align*}
 The constants in~\eqref{eq:dominationend} do not depend on
 $\ell$ so taking the period very big we deduce that
 \begin{align*}
   \frac1{\ell}\log\| DP_Z^\ell(p)\| \ge
   \frac1m\log2>\delta.
 \end{align*}
 This contradicts~\eqref{eq:almostid} and completes the
 proof of Theorem~\ref{thr:open}.
\end{proof}


\section{Dominated splitting and regularity}
\label{sec:singul-hyperb}

Here we prove that positive volume regular invariant subsets
with dominated splitting cannot admit singularities in its
closure and thus are essentially uniformly hyperbolic
sets. This result will be used to prove
Theorem~\ref{thm:there-exists-generic}.

We denote by $\fX^{1+}(M)$ the set of all $C^1$ vector
fields $X$ whose derivative $DX$ is H\"older continuous with
respect to the given Riemannian norm, and we say that
$X\in\fX^{1+}(M)$ is of class $C^{1+}$. We clearly have
$$\fX^1(M)\supset\fX^{1+}(M)\supset\fX^r(M), \quad\text{for every $r\ge2$}.$$

\begin{proposition}
  \label{pr:DDLPF-singhyp}
  Let $X\in\fX^{1+}_\mu(M)$ be given. Assume that
  $\Lambda$ is a regular $X^t$-invariant subset of $M$ with
  positive volume and admitting a dominated splitting. Then
  the closure $A$ of the set of Lebesgue density points of
  $\Lambda$ does not contain singularities.
\end{proposition}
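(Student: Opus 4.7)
I argue by contradiction: assume some $\sigma\in A\cap S(X)$, so there is a sequence of Lebesgue density points $(x_n)$ of $\Lambda$ with $x_n\to\sigma$. The strategy has three stages. First, I upgrade the dominated decomposition on $\Lambda$ to genuine Pesin hyperbolicity of $\mu|_\Lambda$. Second, I use absolute continuity of the Pesin foliations, adapting the argument of \cite[Appendix B]{BochVi04} to the flow setting, to conclude that $A$ contains a non-empty $X^t$-invariant open set of regular orbits. Third, following the ideas of \cite{MPP04}, I extend the splitting continuously through $\sigma$ and combine with incompressibility and the Pesin structure to reach a contradiction.

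\textbf{Pesin hyperbolicity.} A direct computation using $\operatorname{div}X=0$ and $DX^t_x(X(x))=X(X^t(x))$ yields $|\det P^t_X(x)|=|X(x)|/|X(X^t(x))|$, so along any $\mu$-recurrent orbit the Birkhoff time averages of $\log|\det P^t_X|$ vanish. Hence the two Lyapunov exponents of $P^t_X$ along $N^1$ and $N^2$ sum to zero on every ergodic component of $\mu|_\Lambda$. The domination inequality \reff{DS} then forces $\lambda_{N^1}(x)>0>\lambda_{N^2}(x)$ for $\mu$-a.e. $x\in\Lambda$, so that $\mu|_\Lambda$ is a hyperbolic invariant measure of $X^t$.

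\textbf{Open-mod-zero structure of $A$.} Since $X\in\fX^{1+}_\mu(M)$, Pesin theory provides local stable and unstable manifolds $W^{s,u}_{\mathrm{loc}}(x)$ for $\mu$-a.e. $x\in\Lambda$ together with Pesin blocks of uniform estimates on which the stable and unstable holonomies are absolutely continuous. Following step-by-step the argument of \cite[Appendix B]{BochVi04}, but with diffeomorphism iterates replaced by Poincar\'e return maps to local cross sections transverse to the flow, every Lebesgue density point of a Pesin block of positive measure lies in the $M$-interior of $A$. Therefore $A$ contains a non-empty open set $V$ of regular orbits, and by $X^t$-invariance $V$ may be taken $X^t$-invariant as well.

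\textbf{Extension to $\sigma$ and contradiction.} Persistence of dominated splittings (Lemma \ref{le:persistdom}) extends the invariant splitting continuously over $V$. Adjoining the flow direction yields $DX^t$-invariant plane bundles $E^{cs}=N^2\oplus[X]$ and $E^{cu}=N^1\oplus[X]$ on $V$ with $E^{cs}\cap E^{cu}=[X]$. Passing to a subsequence $x_n\to\sigma$ and taking limits in the Grassmannian, the MPP04 extension procedure produces a $DX(\sigma)$-invariant dominated splitting $E^{ss}_\sigma\oplus E^{cu}_\sigma$ of $T_\sigma M$ with $\dim E^{ss}_\sigma=1$; combined with $\operatorname{tr}DX(\sigma)=0$ this forces a Lorenz-like real spectrum $\lambda_3<\lambda_2<0<\lambda_1$ satisfying $\lambda_1+\lambda_2+\lambda_3=0$ and $\lambda_1+\lambda_2>0$. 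The \emph{main obstacle} is now to convert this local picture into a genuine contradiction: orbits in $V$ that enter $B_r(\sigma)$ do so only through a cuspidal neighborhood of the two-dimensional stable manifold $W^s(\sigma)$, and combining the Kac formula for the first-return time to a flow-box inside $V$ with the Lorenz-type slowdown near $\sigma$ (as in \cite{alpi2005,AAPP}) yields $\mu(V\cap B_r(\sigma))/\operatorname{vol}(B_r(\sigma))\to 0$ as $r\to 0$, which is incompatible with $\sigma$ being an accumulation point of density points of $\Lambda$. This contradiction completes the proof.
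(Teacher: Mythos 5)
Your first two stages follow the paper's strategy closely: upgrading the dominated splitting to Pesin hyperbolicity of $\mu|_\Lambda$ and using H\"older--continuity of $DX$ together with absolute continuity of Pesin stable/unstable laminations as in \cite[Appendix B]{BochVi04}. Two remarks there: the paper establishes only that the invariant manifolds of $\mu_A$-a.e.\ point are contained in $A$ (Lemma~\ref{le:DDFLP-Wsinside}) and that a periodic orbit with $\overline{W^s(p)}=\overline{W^u(p)}=A$ exists; it does \emph{not} assert that $A$ has nonempty interior, and the cited BochVi04 appendix treats \emph{uniformly} hyperbolic sets, so your claim that every density point of a Pesin block is in the $M$-interior of $A$ needs an actual local-product-structure argument in the nonuniform setting (sizes of invariant disks are not uniform off a Pesin block), not merely a citation.

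The genuine gap is in your third stage. Even if one grants the unjustified assertion that $V$ meets $B_r(\sigma)$ only in a cusp around $W^s(\sigma)$ and that some Kac-type estimate yields $\mu(V\cap B_r(\sigma))/\mu(B_r(\sigma))\to 0$, this is \emph{not} incompatible with $\sigma$ lying in the closure of the set of density points of $\Lambda$. If $x_n\to\sigma$ are density points, the density of $\Lambda$ is close to $1$ only on balls $B(x_n,r_n)$ with $r_n$ vanishingly small compared to $\dist(x_n,\sigma)$; this places no lower bound on $\mu(\Lambda\cap B_r(\sigma))/\mu(B_r(\sigma))$, which may well tend to $0$. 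So your final deduction does not close the argument. The paper's route is different and perturbative: it first shows (Lemma~\ref{le:singLorenz}, under the open-and-dense non-resonance hypothesis that the proposition is implicitly read against) that $\sigma$ is Lorenz-like; then, using Katok's closing lemma and \eqref{eq:WperiodicDense}, it places $W^{ss}(\sigma)$ (or $W^{uu}(\sigma)$) inside $A$; applies the conservative Connecting Lemma to create an ``orbit-flip'' homoclinic connection at $\sigma_Y$ for a $C^1$-close $Y$; $C^1$-approximates this by an ``inclination-flip'' connection; and finally observes that an inclination-flip is incompatible with the robust dominated splitting of the Linear Poincar\'e Flow guaranteed by Lemma~\ref{le:persistdom}. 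That perturbative contradiction is what your proposal is missing, and the measure-theoretic substitute you propose does not deliver it.
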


We recall that a compact invariant subset $\Lambda$ of
$X\in\fX_{\mu}^1(M)$ is (uniformly) \emph{hyperbolic}
if $$T_\Lambda M=E\oplus [X] \oplus G$$ is a continuous
$DX^t$-invariant splitting with the sub-bundle $E\neq\vec{0}$
uniformly contracted and the sub-bundle $G\neq\vec{0}$ uniformly
expanded by $DX^t$ for $t>0$.

According to ~\cite[Lemma 2.4]{bessa} a compact
invariant set without singularities of a $C^1$
three-dimensional vector field admitting a dominated
splitting for the Linear Poincaré Flow is a uniformly
hyperbolic set. Then we obtain the following.
\begin{corollary}
  \label{cor:DDFLP-unifhyp}
  Let $X\in\fX^{1+}_\mu(M)$ and
  $\Lambda$ be a regular $X^t$-invariant subset of $M$ with
  positive volume and admitting a dominated splitting. Then
  the closure $A$ of the set of Lebesgue density points of
  $\Lambda$ is a hyperbolic set.
\end{corollary}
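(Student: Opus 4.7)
The plan is to reduce directly to the cited Lemma 2.4 of~\cite{bessa}: if I can exhibit $A$ as a compact invariant set of regular points carrying a dominated splitting for the Linear Poincar\'e Flow, then uniform hyperbolicity follows immediately. The heavy lifting has already been done by Proposition~\ref{pr:DDLPF-singhyp}, so the task is essentially to check that each hypothesis of that lemma transfers correctly from $\Lambda$ to $A$.

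First I would let $\Lambda_0\subset\Lambda$ denote the set of Lebesgue density points of $\Lambda$; by the Lebesgue Density Theorem $\mu(\Lambda_0)=\mu(\Lambda)>0$, and since each $X^t$ is a $C^{1+}$ volume-preserving diffeomorphism, it carries density points to density points, so $\Lambda_0$ is $X^t$-invariant. Then $A=\overline{\Lambda_0}$ is compact (as $M$ is compact) and $X^t$-invariant because each $X^t$ is a homeomorphism. Proposition~\ref{pr:DDLPF-singhyp} applied to $\Lambda$ yields $A\cap S(X)=\emptyset$, so $A$ is a compact invariant set consisting entirely of regular points of $X$.

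Next I would promote the dominated splitting on $\Lambda$ (hence on $\Lambda_0$) to one on all of $A$. Since $A$ avoids $S(X)$ and is compact, $\|X\|$ is bounded away from zero on $A$, so the normal bundle $N_A$ is well defined and continuous and the Linear Poincar\'e Flow $P_X^t$ acts continuously on it. The $m$-domination inequality~\eqref{DS} is closed under limits in the Grassmannian of plane subbundles, so the general principle recalled in the paper just after~\eqref{DS} (with references~\cite{HPS77,BDV2004}) that dominated splittings extend continuously to the closure provides an $m$-dominated $P_X^t$-invariant splitting on all of $A$.

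At this point Lemma 2.4 of~\cite{bessa} directly yields uniform hyperbolicity of $A$, finishing the argument. The only potentially delicate step is the extension of the dominated splitting across the boundary of $\Lambda_0$, and this becomes automatic once singularities are excluded from $A$, which is exactly what Proposition~\ref{pr:DDLPF-singhyp} provides. The corollary is therefore a clean consequence of the preceding proposition combined with the cited three-dimensional result from~\cite{bessa}.
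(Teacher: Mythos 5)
Your proposal is correct and follows essentially the same route as the paper: first use Proposition~\ref{pr:DDLPF-singhyp} to rule out singularities in $A$, then invoke Lemma~2.4 of~\cite{bessa} to upgrade the dominated splitting for the Linear Poincar\'e Flow on the compact regular invariant set $A$ to uniform hyperbolicity. You spell out a few points the paper leaves implicit (invariance of the set of density points under the $C^{1+}$ flow, hence invariance and compactness of $A$, and the continuous extension of the dominated splitting from $\Lambda$ to its closure once singularities are excluded), which is a welcome tightening but not a different argument.
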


This implies in particular that there are neither
singular-hyperbolic sets (e.g. Lorenz-like sets or
singular-horseshoes) nor partially hyperbolic sets (see
e.g. \cite{BDV2004} or \cite{MPP04} for the definitions)
with positive volume for $C^{1+}$ incompressible flows on
three-dimensional manifolds. A similar conclusion for
singular-hyperbolic sets was obtained by Arbieto-Matheus
in~\cite{ArbiMath} but assuming that the invariant compact
subset is robustly transitive.

The proof of Proposition~\ref{pr:DDLPF-singhyp} is divided
into several steps, which we state and prove as a sequence
of lemmas in the following subsections.

\subsection{Bounded angles, eigenvalues and Lorenz-like singularities}
\label{sec:bounded-angles-eigen}

Denote by $D(\Lambda)$ the subset of the Lebesgue density
points of $\Lambda$, that is, $x\in D(\Lambda)$ if
$x\in\Lambda$ and
\begin{align*}
  \lim_{r\to0^+}\frac{\mu(\Lambda\cap B(x,r))}{\mu(B(x,r))}=1.
\end{align*}
Is is well known (see e.g. \cite{Ru21} or \cite{Munroe})
that almost every point of a measurable set are Lebesgue
density points, that is $\mu(\Lambda\setminus
D(\Lambda))=0$. Moreover since every nonempty open subset of
$M$ has positive $\mu$-measure, we see that $D(\Lambda)$ is
contained in the closure of $\Lambda$.

Assume that $\Lambda$ is a $X^t$-invariant set without
singularities such that $\mu(\Lambda)>0$ and write $A$ for
the closure of $D(\Lambda)$ in what follows. Note that $A$
is contained in the closure of $\Lambda$.

\begin{lemma}\label{le:DDFLP-noelliptic}
  Suppose that the Linear Poincar\'e Flow over $\Lambda$ has a
  dominated splitting for $X$. Then there exist a
  neighborhood $V$ of $\Lambda$, a neighborhood $\U$ of $X$
  in $\fX^1(M)$ (not necessarily contained in the space of
  conservative flows) and $\eta>0$ such that for every
  $Y\in\U$, every periodic orbit contained in $U$ is
  hyperbolic of saddle type and its eigenvalues
  $\lambda_1$ and $\lambda_2$ satisfy $\lambda_1<-\eta$
  and $\lambda_2>\eta$. Moreover the angle between the
  unstable and stable directions of these periodic
  orbits is greater than $\eta$.
\end{lemma}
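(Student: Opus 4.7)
The plan is to combine the persistence of dominated splittings (Lemma~\ref{le:persistdom}) with the three-dimensional Morales-Pacifico-Pujals mechanism recalled as \cite[Lemma 2.4]{bessa}, namely that a compact invariant set without singularities of a three-dimensional flow admitting a dominated splitting for the Linear Poincar\'e Flow is uniformly hyperbolic, with the hyperbolicity rates controlled by the domination constants.

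First I would apply Lemma~\ref{le:persistdom} to the $m$-dominated invariant set $\Lambda$ to obtain a neighborhood $V$ of $\Lambda$ and a $C^1$-neighborhood $\U$ of $X$ in $\fX^1(M)$ (not necessarily consisting of incompressible fields) such that, for every $Y\in\U$, the maximal invariant set $\Lambda_Y(V)=\bigcap_{t\in\RR}Y^t(V)$ carries an $(m+1)$-dominated splitting $N^1\oplus N^2$ for the Linear Poincar\'e Flow $P^t_Y$, with constants uniform in $Y\in\U$. Since dominated decompositions are continuous on the Grassmannian, compactness of $\overline V$ then yields a uniform lower bound $\eta_1>0$ on the angle between $N^1$ and $N^2$ over $\Lambda_Y(V)$, uniformly in $Y\in\U$.

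Next, for any periodic orbit $\cO\subset V$ of any $Y\in\U$, the set $\cO$ is a compact $Y^t$-invariant subset of $\Lambda_Y(V)$ containing no singularities, so it inherits the $(m+1)$-dominated splitting from the ambient set. Applying \cite[Lemma 2.4]{bessa} yields that $\cO$ is uniformly hyperbolic for $P^t_Y$, one of $N^1_{\cO},N^2_{\cO}$ being uniformly contracted and the other uniformly expanded. Adjoining the flow direction $[Y]$, which carries a zero Lyapunov exponent, produces the splitting $T_{\cO}M=E^s\oplus[Y]\oplus E^u$ and identifies $\cO$ as a hyperbolic saddle periodic orbit. The hyperbolicity rates depend only on the $(m+1)$-domination constants and hence furnish uniform bounds $\lambda_1<-\eta_2$ and $\lambda_2>\eta_2$ on the two nontrivial Lyapunov exponents of $\cO$. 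Setting $\eta=\min\{\eta_1,\eta_2\}$ finishes the proof.

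The main obstacle I expect is making the hyperbolicity rates genuinely \emph{uniform} across $\U$ and over all periodic orbits contained in $V$. Although the $(m+1)$-domination constants are uniform in $Y\in\U$ by construction, translating them into explicit contraction and expansion bounds requires following carefully the quantitative side of the argument of \cite[Lemma 2.4]{bessa}, whose key observation is that on a non-singular invariant compact set one of the two dominated subbundles is already uniformly contracted, after which the complementary subbundle becomes uniformly expanded through the domination inequality (\ref{DS}). A secondary point is to check that this quantitative step nowhere invokes incompressibility, since $\U$ is taken in $\fX^1(M)$ rather than in $\fX^1_\mu(M)$; this should cause no trouble, as the MPP-type argument at the level of a single periodic orbit only uses domination and the absence of fixed points.
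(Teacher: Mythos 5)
Your application of \cite[Lemma 2.4]{bessa} to an arbitrary $Y\in\fX^1(M)$ is the weak point, and the concern you flag at the end (``this should cause no trouble, as the MPP-type argument at the level of a single periodic orbit only uses domination and the absence of fixed points'') is precisely where the argument breaks. For a general, non-conservative $C^1$ three-flow, a compact non-singular invariant set with a dominated splitting for the Linear Poincar\'e Flow need \emph{not} be uniformly hyperbolic: a periodic orbit whose two nontrivial Floquet multipliers are, say, $1/4$ and $1/2$ carries a dominated splitting (one direction dominates the other) yet is a sink, not a saddle. The implication ``domination for the LPF on a compact non-singular set $\Rightarrow$ uniform hyperbolicity'' in \cite[Lemma 2.4]{bessa} is proved there for zero-divergence vector fields, where incompressibility forces the sum of normal Lyapunov exponents to vanish, ruling out the sink/source configuration. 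Since the neighborhood $\U$ in the statement is taken in $\fX^1(M)$ and not in $\fX^1_\mu(M)$, you cannot invoke that lemma as a black box.

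The paper avoids this by using the proximity of $Y$ to the incompressible $X$ quantitatively: shrinking $\U$ gives $|\lambda_1+\lambda_2|\le\epsilon$ with $\epsilon<\kappa/2$, while domination (via Lemma~\ref{le:persistdom}) gives $\lambda_1+2\kappa\le\lambda_2$; together these force $\lambda_2\ge\kappa-\epsilon/2>0$ and $\lambda_1\le 3\epsilon/2-\kappa<0$, yielding the uniform $\eta$. This is the essential step your write-up is missing. For the angle bound, your route (uniform lower bound on angles for dominated splittings, via continuity and uniform domination constants over $\U$) is a legitimate alternative and more elementary than the paper's argument, which instead proceeds by contradiction through a flow version of Franks' Lemma to turn a small-angle saddle into a sink or source; but even a correct angle argument would not rescue the hyperbolicity step above.
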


\begin{proof}
  The Dominated Splitting for the Linear Poincar\'e Flow
  extends by continuity to every \emph{regular} orbit $\cO$
  which remains close to $\Lambda$ for a $C^1$ nearby flow
  $Y$, this is Lemma~\ref{le:persistdom}. The domination
  implies that the eigenvalues $\lambda_1\le\lambda_2$ of
  $\cO$ satisfy $\lambda_1+2\kappa\le\lambda_2$ for some
  $\kappa>0$ which only depends on the domination constant
  of $\Lambda$.  Since the flow $Y$ is close to being
  conservative, we have $|\lambda_1+\lambda_2|\le\epsilon$,
  where we can take $\epsilon<\kappa/2$ just by letting $Y$
  be in a small $C^1$-neighborhood of $X$.

  Thus we have $-\lambda_2-\epsilon\le\lambda_1$ which
  implies
  $-\lambda_2-\epsilon+2\kappa\le\lambda_1+2\kappa\le\lambda_2$
  and so $2\lambda_2\ge2\kappa-\epsilon>0$ on the one hand.
  On the other hand $\lambda_1\le\epsilon-\lambda_2$ implies
  $\lambda_1\le\epsilon-(\kappa-\epsilon/2)=3\epsilon/2-\kappa<0$.

  Hence there exists $\eta>0$, independent of $Y$ in a $C^1$
  neighborhood of $X$, and independent of the periodic orbit
  $\cO$ of $Y$ in a neighborhood of $\Lambda$, such that
  $\lambda_1<-\eta$ and $\lambda_2>\eta$, as stated.

  For the angle bound we argue by contradiction as
  in~\cite{MPP04}: assume there exists a sequence of flows
  $Y_n\xrightarrow[n\to+\infty]{C^1}X$ and of periodic
  orbits $\cO_n$ of $Y_n$ contained in the neighborhood $V$
  of $\Lambda$ such that the angle $\alpha_n$ between the
  unstable subspace and the stable direction
  satisfies $\alpha_n\xrightarrow[n\to+\infty]{}0$.

  Then as in the proof of~\cite[Theorem 3.6]{MPP04}
  (or~\cite[Theorem 3.31]{AraPac07}) we can find (through a
  flow version of Frank's Lemma, see~\cite{Fr71}
  and~\cite[Appendix]{AraPac07}) an arbitrarily small $C^1$
  perturbation $Z_n$ of $Y_n$, for all big enough $n\ge1$,
  sending the stable direction close to the unstable
  direction along the periodic orbit, such that the orbit of
  $\cO_n$ becomes a sink or a source for $Z_n$. This
  contradicts the first part of the statement of the lemma.
\end{proof}

We say that a singularity $\sigma$ is \emph{Lorenz-like} for
$X$ if $DX(\sigma)$ has three real eigenvalues
$\lambda_2\le\lambda_1\le\lambda_3$ satisfying
$\lambda_2<\lambda_3<0<-\lambda_3<\lambda_1$.

\begin{lemma}
  \label{le:singLorenz}
  Assume that $X\in\fX^1_\mu(M)$ is such that all
  singularities are hyperbolic with no ressonances
  (real eigenvalues are all distinct).  Then the
  singularities $S(X)\cap A$ are all Lorenz-like for $X$ or
  for $-X$.
\end{lemma}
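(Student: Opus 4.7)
Let $\sigma\in S(X)\cap A$ and denote the eigenvalues of $DX(\sigma)$ by $\alpha_1,\alpha_2,\alpha_3$. Since $X$ is divergence-free the trace of $DX(\sigma)$ vanishes, so $\alpha_1+\alpha_2+\alpha_3=0$, and hyperbolicity excludes any eigenvalue with zero real part. The proof then splits into a short algebraic step, covering the case of three real eigenvalues, and a substantial dynamical step ruling out complex eigenvalues.

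Assume first that all three eigenvalues are real. By the no-resonance hypothesis they are distinct, so we may order them $\alpha_1<\alpha_2<\alpha_3$. Trace-zero together with the mixed-sign constraint leaves exactly two configurations. If $\alpha_1<\alpha_2<0<\alpha_3$, then $\alpha_3=-\alpha_1-\alpha_2>-\alpha_2$, which is precisely the Lorenz-like condition for $X$. If instead $\alpha_1<0<\alpha_2<\alpha_3$, then passing to $-X$ reverses signs to $-\alpha_3<-\alpha_2<0<-\alpha_1$, and the identity $-\alpha_1=\alpha_2+\alpha_3>\alpha_2$ shows that $\sigma$ is Lorenz-like for $-X$.

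The substantive part is to exclude the remaining possibility: that $DX(\sigma)$ has a complex conjugate pair $\beta\pm i\omega$ with $\omega\neq 0$, which by trace-zero forces the third eigenvalue to be the real number $-2\beta$. Replacing $X$ by $-X$ if needed, assume $\beta>0$, so $\sigma$ is a saddle-focus with a one-dimensional strong stable direction and a two-dimensional unstable plane on which the linearized flow rotates with angular speed $\omega$ while expanding isotropically at rate $\beta$. Since $\sigma\in A$, choose density points $x_n\in D(\Lambda)$ with $x_n\to\sigma$. Hyperbolicity of $\sigma$ forces the orbit of each $x_n$ to spend a time $T_n\sim\log\bigl(1/\dist(x_n,\sigma)\bigr)\to\infty$ inside an arbitrarily small neighborhood of $\sigma$, where the flow is uniformly $C^1$-close to its linearization $e^{t\,DX(\sigma)}$. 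Along the part of this long passage where the trajectory is close to the two-dimensional complex eigenplane, the Linear Poincar\'e Flow on the two-dimensional normal bundle is well approximated by the planar rotation-and-expansion induced by the $\beta\pm i\omega$ block, which admits no continuous invariant line field with distinct exponential growth rates. Iterating the domination inequality~\eqref{DS} over time $mT_n$ forces the ratio $\|P^{mT_n}_X\mid N^1\|/\|P^{mT_n}_X\mid N^2\|$ to remain bounded away from zero, contradicting the decay $(1/2)^{T_n/m}\to 0$ required by $m$-domination.

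The main obstacle is making this limit argument rigorous, since $N_{x_n}$ degenerates as $X(x_n)\to 0$ and the directions $X(x_n)/\|X(x_n)\|$ may cluster differently along subsequences. A clean way to circumvent this is a perturbative detour in the spirit of Morales--Pacifico--Pujals~\cite{MPP04}: combine Pugh's Closing Lemma for conservative flows from~\cite{PughRobin83} with a flow version of Franks' Lemma to extract, from the near-recurrence of the orbits of $x_n$ close to $\sigma$, a periodic orbit of a $C^1$-nearby incompressible vector field $Y$, of arbitrarily long period and passing arbitrarily close to $\sigma$, which therefore inherits the rotational structure of the $\beta\pm i\omega$ block. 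Such an orbit must exhibit either arbitrarily small angle between its stable and unstable directions or arbitrarily poor domination bounds for its Linear Poincar\'e Flow; by the persistence of dominated splitting under $C^1$ perturbations stated in Lemma~\ref{le:persistdom}, both conclusions contradict the uniform lower bounds supplied by Lemma~\ref{le:DDFLP-noelliptic}, which closes the argument.
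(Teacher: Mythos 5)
Your algebraic treatment of the case of three real eigenvalues is correct and essentially identical to the paper's: trace zero plus hyperbolicity plus no-resonance forces the Lorenz-like ordering for $X$ or $-X$. The substantive issue is the exclusion of complex eigenvalues, and there your argument has a real gap.

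You acknowledge that the direct linearization argument breaks down because $N_{x_n}$ degenerates as $X(x_n)\to 0$, and you propose to repair this by a ``perturbative detour'': use Pugh's Closing Lemma plus a Franks-type perturbation to extract a long periodic orbit for a nearby $Y$ passing close to $\sigma$, then claim this orbit ``inherits the rotational structure'' and so must have either small angle or poor domination, contradicting Lemmas~\ref{le:persistdom} and~\ref{le:DDFLP-noelliptic}. But that claim is exactly what needs proof, and the Closing Lemma by itself gives you none of it: it produces a periodic orbit shadowing a recurrent piece of orbit, with no control over the eigenvalues of its Poincar\'e return map, the angle between its invariant directions, or its domination constant. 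A regular periodic orbit that visits a small neighborhood of a saddle-focus and then leaves is a priori perfectly compatible with uniform hyperbolicity and uniform domination; the rotation in the complex eigenplane does not automatically propagate into the Linear Poincar\'e Flow of a nearby regular orbit, precisely because of the degeneracy of the normal bundle you already flagged. So the ``detour'' restates the difficulty rather than resolving it.

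The paper's proof goes a different way, and the difference matters. Instead of closing a recurrent orbit near $\sigma$, it uses the positive volume of $A$ to guarantee that infinitely many distinct orbits of $\Lambda$ pass arbitrarily near $\sigma$, hence near both $W^s(\sigma)$ and $W^u(\sigma)$. It then invokes the conservative Connecting Lemma~\cite{WX00} to create a \emph{saddle-focus homoclinic connection} to the continuation of $\sigma$ for a $C^1$-close incompressible flow, smoothes the field via~\cite{zup79}, and then unfolds this saddle-focus loop using the Biragov--Shil\cprime nikov bifurcation theory for three-dimensional conservative flows~\cite{BiraShil92}, which \emph{does} control eigenvalues: the unfolding produces an elliptic closed orbit near $A$. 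This elliptic orbit contradicts Lemma~\ref{le:DDFLP-noelliptic} directly. That is the ingredient missing from your argument: the Biragov--Shil\cprime nikov unfolding is what converts the rotation at $\sigma$ into a concrete periodic orbit with all Lyapunov exponents zero, something the Closing Lemma alone cannot deliver.
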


\begin{remark}
  \label{rmk:opendense}
  The assumptions of the lemma above hold true for an open
  and dense subset of all $C^r$ vector fields, both volume
  preserving or not.
\end{remark}

\begin{proof}
  Fix $\sigma$ in $S(X)\cap A$ if this set is nonempty
  (otherwise there is nothing to prove). By assumption on
  $X$ we known that $\sigma$ is hyperbolic. As
  in~\cite{MPP04} we show first that $\sigma$ has only real
  eigenvalues. For otherwise we would get a conjugate pair
  of complex eigenvalues $\omega, \overline\omega$ and a
  real one $\lambda$ and, by reversing time if needed, we
  can assume that $\lambda<0<\text{Re}(\omega)$. Since
  $\mu(A)>0$ there are infinitely many distinct orbits of
  $\Lambda$ passing through every given neighborhood of
  $\sigma$, for each regular orbit of a flow is a regular
  curve, and so does not fill volume in a three-dimensional
  manifold.
  
  Using the Connecting Lemma of Hayashi adapted to
  conservative flows (see e.g.~\cite{WX00}) we can find a
  $C^1$-close flow $Y$ preserving the same measure $\mu$
  with a saddle-focus connection associated to the
  continuation $\sigma_Y$ of the singularity $\sigma$.  By a
  small perturbation of the vector field we can assume that
  $Y$ is of class $C^\infty$ and still $C^1$-close to $X$
  (see e.g.~\cite{zup79}).

  We can now unfold the saddle-focus connection as in
  \cite{BiraShil92} to obtain a periodic orbit with all
  Lyapunov exponents equal to zero (an elliptic closed
  orbit) for a $C^1$-close flow and near $A$. This
  contradicts Lemma~\ref{le:DDFLP-noelliptic}, since such
  orbit will be contained in a neighborhood of $\Lambda$.
  This shows that complex eigenvalues are not allowed for
  any singularity in $A$.

  Let then $\lambda_2\le\lambda_3\le\lambda_1$ be the
  eigenvalues of $\sigma$. We have $\lambda_2<0<\lambda_1$
  because $\sigma$ is hyperbolic. The preservation of volume
  implies that $\lambda_2=-(\lambda_1+\lambda_3)<0$ so that
  $-\lambda_3<\lambda_1$. We have now two cases:
  \begin{description}
  \item[$\lambda_3<0$] this implies
    $\lambda_2<\lambda_3<0<-\lambda_3<\lambda_1$ by the
    non-resonance assumption, and $\sigma$ is Lorenz-like
    for $X$;
  \item[$\lambda_3>0$] since
    $\lambda_1=-(\lambda_2+\lambda_3)>0$ the non-resonance
    assumption ensures that
    $\lambda_2<-\lambda_3<0<\lambda_3<\lambda_1$, so
    $\sigma$ is Lorenz-like for $-X$.
  \end{description}
  The proof is complete.
\end{proof}

\subsection{Invariant manifolds of a positive volume set
  with dominated splitting for the Linear Poincaré Flow}
\label{sec:invari-manifolds-pos}

\subsubsection{Invariant manifolds and (non-uniform) hyperbolicity}
\label{sec:invari-manifolds-non}

An embedded disk $\gamma\subset M$ is a (local) {\em
  strong-unstable manifold}, or a {\em strong-unstable
  disk}, if $\dist(X^{-t}(x),X^{-t}(y))$ tends to zero
exponentially fast as $t\to+\infty$, for every
$x,y\in\gamma$. In the same way $\gamma$ is called a (local)
{\em strong-stable manifold}, or a {\em strong-stable disk},
if $\dist(X^{t}(x),X^{t}(y))\to0$ exponentially fast as
$n\to+\infty$, for every $x,y\in\gamma$. It is well-known
that every point in a uniformly hyperbolic set possesses a
local strong-stable manifold $W_{loc}^{ss}(x)$ and a local
strong-unstable manifold $W_{loc}^{uu}(x)$ which are disks
tangent to $E_x$ and $G_x$ at $x$ respectively with
topological dimensions $d_E=\dim(E)$ and $d_G=\dim(G)$
respectively. Considering the action of the flow we get the
(global) \emph{strong-stable manifold}
$$W^{ss}(x)=\bigcup_{t>0}
X^{-t}\Big(W^{ss}_{loc}\big(X^t(x)\big)\Big)$$
and the
(global) \emph{strong-unstable manifold}
$$W^{uu}(x)=\bigcup_{t>0}X^{t}\Big(W^{uu}_{loc}\big(X^{-t}(x)\big)\Big)$$
for every point $x$ of a uniformly hyperbolic set. Similar
notions are defined in a straightforward way for
diffeomorphisms.  These are immersed submanidfolds with the
same differentiability of the flow or the diffeomorphism.
In the case of a flow we also consider the \emph{stable
  manifold} $W^s(x)=\cup_{t\in\RR} X^{t}\big(W^{ss}(x)\big)$
and \emph{unstable manifold}
$W^u(x)=\cup_{t\in\RR}X^{t}\big(W^{uu}(x)\big)$ for $x$ in a
uniformly hyperbolic set, which are flow invariant.

We note that these notions are well defined for a hyperbolic
periodic orbit, since this compact set is itself a
hyperbolic set.

Now we observe that since $A$ has positive volume, 
the dominated splitting of the Linear Poincaré Flow 
implies that the Lebesgue measure $\mu_A$ normalized and
restricted to $A$ is a (non-uniformly) \emph{hyperbolic
  invariant probability measure}, see
e.g. \cite{barreira-pesin}: every Lyapunov exponent of
$\mu_A$ is non-zero, except along the direction of the
flow. Indeed, (recall the arguments in the proof of
Lemma~\ref{le:DDFLP-noelliptic}) the Lyapunov exponents
$\lambda_1\le 0 \le \lambda_2$ along every Oseledet's
regular orbit satisfy $\lambda_1+\lambda_2=0$ since the flow
is incompressible, and for every Oseledet's regular orbit in
$\Lambda$ (a non-empty set because $\Lambda$ has positive
volume) the exponents also satisfy
$\lambda_1+2\kappa\le\lambda_2$ for some $\kappa>0$
depending only on the domination strength --- in particular
$\kappa$ \emph{does not depend on the orbit chosen inside
  $\Lambda$}. Thus there exists $\eta>0$ such that
$\lambda_2=-\lambda_1>\eta$ along every Oseledet's regular
orbit inside $\Lambda$.

\emph{Assuming from now on that $X\in\fX^{1+}(M)$} we have,
according to the non-uniform hyperbolic theory (see
\cite{Pe76,Pe77,barreira-pesin}), that there are smooth
strong-stable and strong-unstable disks tangent to the
directions corresponding to negative and positive Lyapunov
exponents, respectively, at $\mu_A$ almost every point. The
sizes of these disks depend measurably on the point as well
as the rates of exponential contraction and expansion. We
can define as before the strong-stable, strong-unstable,
stable and unstable manifolds at $\mu_A$ almost all points.

In addition, since $\mu$ is a smooth invariant measure, we
can use \cite[Theorem 11.3]{barreira-pesin2} and conclude
that there are at most countably many ergodic components of
$\mu_A$. \emph{Therefore we assume from now on that $\mu_A$
  is ergodic without loss of generality.}

In addittion, hyperbolic smooth ergodic invariant
probability measures for a $C^{1+}$ dynamics are in the
setting of Katok's Closing Lemma, see~\cite{katok80}
or~\cite[Section 15]{barreira-pesin2}. In particular we have
that the support of $\mu_A$ is contained in the closure of
the closed orbits inside $A$
\begin{align}\label{eq:perdense}
  \supp(\mu_A)\subset\overline{\per(X)\cap A},
\end{align}
where the periodic points in our setting are all hyperbolic
by Lemma~\ref{le:DDFLP-noelliptic}.

\subsubsection{Almost all invariant manifolds are contained in $A$}
\label{sec:almost-all-invari}

Now we adapt the arguments in \cite{BochVi04} to our setting
to deduce the following. Let $\mu_u$ and $\mu_s$ denote the
measure induced on (strong-)unstable and (strong-)stable
manifolds by the Lebesgue volume form $\mu$.

\begin{lemma}
  \label{le:DDFLP-Wsinside}
  For $\mu_A$ almost every $x$ the corresponding invariant
  manifolds satisfy
  \begin{align*}
    \mu_s\big(W^{ss}(x) \setminus A\big)=0 \quad\text{and}\quad
    \mu_u\big(W^{uu}(x)\setminus A\big)=0
  \end{align*}
  that is, the invariant manifolds are $\mu_{u,s}\bmod0$
  contained in $A$.
\end{lemma}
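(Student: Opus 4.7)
The plan is to follow the strategy of~\cite[Appendix B]{BochVi04}, now adapted to the flow setting. The main ingredients are: (i) the Pesin theory for the hyperbolic ergodic smooth measure $\mu_A$, which is available since $X\in\fX^{1+}_\mu(M)$ and this has already been set up in Section~\ref{sec:invari-manifolds-non}; (ii) the absolute continuity of the strong--stable (resp. strong--unstable) Pesin lamination; and (iii) the Lebesgue density theorem applied to $A$.

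First, I would fix a Pesin block $H_\ell\subset A$ of positive $\mu_A$-measure on which the local strong--stable and strong--unstable manifolds $W^{ss}_\rho(\cdot)$ and $W^{uu}_\rho(\cdot)$ have a uniform size $\rho=\rho(\ell)>0$ and depend continuously on the base point. Around a typical point $x_0\in H_\ell$ I would build a Pesin box $R$ as the saturation by $W^{ss}_\rho$-plaques of a small two-dimensional transversal $\Sigma$ obtained by flowing a short strong--unstable arc through $x_0$ for a small time interval. By the absolute continuity of the strong-stable Pesin lamination, the restriction $\mu|_R$ admits a disintegration
\begin{equation*}
  \mu|_R \;=\; \int_{\Sigma} \mu^{ss}_y\, d\nu(y),
\end{equation*}
where the conditional $\mu^{ss}_y$ is equivalent to the induced volume $\mu_s$ on $W^{ss}_\rho(y)\cap R$ with Radon--Nikodym density bounded away from $0$ and $\infty$.

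Second, since $\mu_A$-a.e. $x_0\in H_\ell$ is a Lebesgue density point of $A$ (by the Lebesgue density theorem applied inside $A$), we may choose $R$ so small that $\mu(R\setminus A)/\mu(R)<\eta$ for $\eta>0$ arbitrarily small. A Fubini--Chebyshev type estimate on the disintegration above then produces a subset $\Sigma_0\subset\Sigma$ with $\nu(\Sigma\setminus\Sigma_0)\le\sqrt{\eta}\,\nu(\Sigma)$ such that
\begin{equation*}
  \mu_s\bigl(W^{ss}_\rho(y)\cap R\setminus A\bigr)=0
  \qquad\text{for every } y\in\Sigma_0.
\end{equation*}
Letting $\eta\to 0$ one concludes that $\mu_s(W^{ss}_\rho(y)\cap R\setminus A)=0$ for $\nu$-a.e. $y\in\Sigma$. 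Using the $X^t$-invariance of $A$ together with the smoothness of the flow on leaves, one upgrades this to the global statement
\begin{equation*}
  \mu_s\bigl(W^{ss}(y)\setminus A\bigr)=0
  \qquad\text{for } \nu\text{-a.e. } y\in\Sigma,
\end{equation*}
via the identity $W^{ss}(y)=\bigcup_{t\ge 0}X^{-t}\!\bigl(W^{ss}_\rho(X^t y)\bigr)$ and the fact that each $X^{-t}$ is a $C^{1+}$ diffeomorphism between leaves, hence preserves the $\mu_s$-null ideal.

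Third, let $E=\{x\in A:\mu_s(W^{ss}(x)\setminus A)>0\}$. I would argue by contradiction assuming $\mu_A(E)>0$. Since $\mu_A$ is ergodic and $A$-invariant, $\mu_A$-a.e. point of $A$ returns to some Pesin block $H_\ell$ infinitely often in both time directions by the Poincar\'e Recurrence Theorem. Pick $x\in E$ with some forward time $t>0$ such that $X^t(x)$ lies in $H_\ell$ and is a density point of $A$ within a Pesin box $R$ of the previous type. By the second step, $\nu$-a.e. strong--stable leaf meeting $\Sigma$ is $\mu_s\bmod 0$ contained in $A$, and one can arrange (by continuously varying the transversal $\Sigma$) that $W^{ss}(X^t x)$ is among them. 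Pulling back by $X^{-t}$, which is absolutely continuous on leaves, gives $\mu_s(W^{ss}(x)\setminus A)=0$, contradicting $x\in E$. Hence $\mu_A(E)=0$. The strong--unstable case is obtained by applying the same argument to the reversed flow $-X$, since all the hypotheses are symmetric.

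The main obstacle is the correct setup of the local product structure \emph{for a flow} rather than a diffeomorphism: one must work with two-dimensional transversals incorporating a short flow segment, check that the disintegration of $\mu$ along strong--stable plaques carries equivalent conditional measures (absolute continuity of the Pesin lamination in the flow setting, which follows from Pesin's theory in $C^{1+}$), and verify that ``density point of $A$ in $M$'' transfers to ``density point in the transversal'' $\nu$-almost everywhere. Once these technical points are in place, the Fubini-plus-density-point argument above closes the proof cleanly.
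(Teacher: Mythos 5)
Your overall strategy is in the same circle of ideas (Pesin blocks, absolute continuity, density points, recurrence) but takes a genuinely different route from the paper's, and this route as written has a gap in two places.

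First, your step two does not close. You show that for a Pesin box $R$ with $\mu(R\setminus A)/\mu(R)<\eta$ a Chebyshev estimate gives a set $\Sigma_0\subset\Sigma$ with $\nu(\Sigma\setminus\Sigma_0)\le\sqrt{\eta}\,\nu(\Sigma)$ on which the leaf-wise bad fraction is small (and, incidentally, Chebyshev gives $\le\sqrt{\eta}$ on each such leaf, not $=0$ as you wrote). But when you then ``let $\eta\to0$'' you must simultaneously shrink $R$ towards the density point $x_0$, so the transversal $\Sigma$ and the family of plaques keep changing. Nothing forces the exceptional sets to decrease to a common null set, and you cannot conclude that $\nu$-a.e.\ leaf of a \emph{fixed} box has zero bad measure. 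Second, in step three the sentence ``one can arrange (by continuously varying the transversal $\Sigma$) that $W^{ss}(X^tx)$ is among them'' is unjustified: a $\nu$-a.e.\ statement never promotes any single prescribed leaf to the good set, and moving $\Sigma$ does not change which leaf carries $X^tx$.

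The paper resolves exactly this difficulty by arguing entirely \emph{inside a leaf} and compensating with a bounded-distortion estimate, which is the ingredient your proposal is missing. Concretely: pick $x\in\cH(\kappa)$ which is a $\mu_u$-density point of $A\cap W^{uu}_{loc}(x)$ \emph{in the leaf} (available because $\cH(\kappa)$ has positive volume, so by absolute continuity it meets almost every strong-unstable plaque in positive $\mu_u$-measure). Use Poincar\'e recurrence to produce iterates $x_k=f^{n_k}(x)\in\cH(\kappa)$ with $x_k\to x$, and set $W_k=f^{-n_k}\big(W^{uu}_{loc}(x_k)\big)$, a sequence of subdisks of $W^{uu}_{loc}(x)$ shrinking exponentially to $x$. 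Leaf-wise density gives $\mu_u(W_k\setminus A)/\mu_u(W_k)\to0$, and the bounded distortion of Theorem~\ref{thm:boundedistortion} transfers this to
$\mu_u\big(W^{uu}_{loc}(x_k)\setminus A\big)/\mu_u\big(W^{uu}_{loc}(x_k)\big)\le h_\kappa\,\mu_u(W_k\setminus A)/\mu_u(W_k)\to0$,
that is, it ``blows up'' a tiny disk to one of uniform size while controlling the measure ratio. Continuity of the plaques on $\cH(\kappa)$ together with $A$ being closed then yields $\mu_u\big(W^{uu}_{loc}(x)\setminus A\big)=0$. This is the missing mechanism; without a distortion control that relates the bad fraction on a shrinking ball to the bad fraction on a unit-scale plaque, the density-point information cannot be propagated, and the Fubini route you propose does not produce the statement.
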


In addition, since $A$ is closed and every open subset of
either $W^{ss}(x)$ or $W^{uu}(x)$ has positive $\mu_s$ or
$\mu_u$ measure, respectively, then we see that in fact
\begin{align}\label{eq:WscontainedA}
  W^{ss}(x)\subset A \quad\text{and}\quad W^{uu}(x)\subset A
  \quad\text{for  } \mu-\text{almost every  } x.
\end{align}

To prove Lemma~\ref{le:DDFLP-Wsinside} we need a bounded
distortion property along invariant manifolds which is
provided by \cite[Theorems 11.1 \&
11.2]{barreira-pesin2}. To state this properly we need the
notion of hyperbolic block for a hyperbolic invariant
probability measure.

\subsubsection{Hyperbolic blocks and bounded distortion along invariant manifolds}
\label{sec:hyperb-blocks-bounde}

The measurable dependence of the invariant manifolds on the
base point means that for each $\kappa\in\NN$ we can find a
compact \emph{hyperbolic block} $\cH(\kappa)$ and positive
numbers $C_x$ satisfying
\begin{itemize}
\item $\dist(X^t(y),X^t(x))\le C_x
  e^{-t\tau}\cdot\dist(y,x)$ for all $t>0$ and $y\in
  W_{loc}^{ss}(x)$, and analogously for $y\in
  W_{loc}^{uu}(x)$ exchanging the sign of $t$;
\item $C_x\le\kappa$ and $\tau_x\ge\kappa^{-1}$ for every
  $x\in \cH(\kappa)$;
\item $\cH(\kappa)\subset\cH(\kappa+1)$ for all $k\ge1$ and
  $\mu_A\big(\cH(\kappa)\big)\to1$ as $\kappa\to+\infty$;
\item the $C^1$ strong-stable and strong-unstable disks
  $W^{ss}_{loc}(x)$ and $W^{uu}_{loc}(x)$ vary continuously
  with $x\in\cH(\kappa)$ (in particular the sizes of these
  disks and the angle between them are uniformly bounded
  from zero for $x$ in $\cH(\kappa)$).
 \end{itemize}

Now we have the bounded distortion property.

 \begin{theorem}{\cite[Theorems 11.1 \&
     11.2]{barreira-pesin2}}
   \label{thm:boundedistortion}
   Fix $\kappa\in\NN$ such that $\mu_A(\cH(\kappa))>0$. Then
   the function
   \begin{align*}
     h^s(x,y):=\prod_{i\ge0} \frac{\big|\det Df\mid
       E^s(f^i(x))\big|}{\big|\det Df\mid E^s(f^i(y))\big|}  
   \end{align*}
   is H\"older-continuous for every $x\in\cH(\kappa)$ and
   $y\in W^{ss}_{loc}(x)$, where $f:=X^1$ is the time-$1$
   map of the flow $X^t$ and $E^s$ is the direction
   corresponding to negative Lyapunov exponents. 

   An analogous statement is true for a function $h^u$ on
   the unstable disks in $\cH(\kappa)$ exchanging $E^s$ with
   the direction $E^u$ corresponding to positive Lyapunov
   exponents and reversing the sign of $i$ in the product
   $h^s$ above.
 \end{theorem}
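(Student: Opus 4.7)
The plan is to follow the standard Pesin-theoretic strategy: on a hyperbolic block the stable distribution and the derivative cocycle enjoy uniform H\"older estimates, and combined with the exponential contraction along strong-stable leaves this lets us control the infinite product term by term and sum the resulting geometric series.

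First I would fix $x\in\cH(\kappa)$ and $y\in W^{ss}_{loc}(x)$ and use the uniform bounds of the block to get
\[
\dist\big(f^i(x),f^i(y)\big)\le \kappa\,e^{-i/\kappa}\,\dist(x,y)
\quad\text{for all } i\ge0,
\]
where $f=X^1$. Next I would recall that the stable distribution $E^s$ is $\alpha$-H\"older on each hyperbolic block $\cH(\kappa)$ for some $\alpha>0$ (this is one of the classical invariant-manifold regularity results of Pesin theory, proved via graph-transform arguments and the uniform domination/contraction estimates available on $\cH(\kappa)$). Since $X\in\fX^{1+}_\mu(M)$ the derivative $Df$ is itself H\"older, so the scalar function
\[
\vfi(z):=\log\big|\det Df\mid E^s(z)\big|
\]
is $\beta$-H\"older on $\cH(\kappa)$ for some $\beta\in(0,\alpha]$, because it is obtained by composing the H\"older cocycle with the H\"older subbundle.

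The key step is then to combine these two ingredients to bound the $i$-th summand of $\log h^s(x,y)$. Writing
\[
\log h^s(x,y)=\sum_{i\ge0}\big(\vfi(f^i(x))-\vfi(f^i(y))\big),
\]
the H\"older estimate on $\vfi$ and the contraction estimate above give
\[
\big|\vfi(f^i(x))-\vfi(f^i(y))\big|\le C\cdot\dist\big(f^i(x),f^i(y)\big)^{\beta}\le C\,\kappa^{\beta}\,e^{-i\beta/\kappa}\,\dist(x,y)^{\beta}.
\]
Summing the geometric series in $i$ yields both convergence of the product defining $h^s$ and the H\"older bound
\[
\big|\log h^s(x,y)\big|\le C'\,\dist(x,y)^{\beta},
\]
from which the H\"older continuity of $h^s$ in $y$ follows by exponentiation. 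The statement for $h^u$ is obtained by applying the same argument to the time-reversed flow $X^{-1}$, whose strong-unstable manifolds are the strong-stable manifolds of $-X$, swapping $E^s$ for $E^u$ and the index $i\ge0$ for $i\le0$.

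The main obstacle I would expect is genuinely the H\"older regularity of the stable subbundle on $\cH(\kappa)$: although the subbundle is only measurable on the whole manifold, the uniform hyperbolic estimates valid on the block allow one to recover H\"older regularity via a fixed-point/graph-transform argument, but this is the delicate technical ingredient. Once this is granted, the rest of the proof is the routine telescoping/summation estimate above.
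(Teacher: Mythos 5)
The paper does not prove Theorem~\ref{thm:boundedistortion}; it is quoted as a known result from \cite[Theorems 11.1 \& 11.2]{barreira-pesin2} and used as a black box. Your sketch therefore has no counterpart in the text to compare against, and has to stand on its own.

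Your overall strategy is the standard Pesin-theoretic one and you point in the right general direction, but there is a concrete gap in the execution. You establish that $\vfi(z)=\log\big|\det Df\mid E^s(z)\big|$ is $\beta$-H\"older \emph{on the hyperbolic block} $\cH(\kappa)$, and then apply that estimate to the pairs $f^i(x),f^i(y)$ for all $i\ge0$. But for $i\ge1$ these points will in general have left $\cH(\kappa)$: a hyperbolic block is only a positive-measure recurrence set, not a forward-invariant set, and the uniform estimates on $\cH(\kappa)$ say nothing at $f^i(x)$. So the claimed bound $|\vfi(f^i(x))-\vfi(f^i(y))|\le C\kappa^{\beta}e^{-i\beta/\kappa}\dist(x,y)^{\beta}$ with a single, $i$-independent constant $C$ is not justified as written, and this is exactly where the weight of the argument lies.

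Two adjustments are needed to repair it. First, the regularity you actually use is not the transversal H\"older continuity of the stable distribution over $\cH(\kappa)$ (a genuinely hard theorem), but the H\"older variation of $E^s$ \emph{along a single leaf} $W^{ss}(f^i(x))$, evaluated at the two points $f^i(x)$ and $f^i(y)\in W^{ss}(f^i(x))$. Since $E^s$ restricted to a strong-stable leaf is precisely the tangent line of a $C^{1+\alpha}$ curve, this intraleaf regularity comes directly from Pesin's stable manifold theorem, with no graph-transform argument transversal to the leaves. Second, the $C^{1+\alpha}$ constants of these leaves along the forward orbit of a block point are not uniform; they are only \emph{tempered}, i.e.\ for every $\epsilon>0$ they grow at most like $\kappa e^{\epsilon i}$. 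The correct $i$-th term estimate is thus of the form $|\vfi(f^i(x))-\vfi(f^i(y))|\le \kappa e^{\epsilon i}\big(\kappa e^{-i/\kappa}\dist(x,y)\big)^{\beta}$, and one must fix $\epsilon<\beta/\kappa$ so the series is still geometric. With those two corrections, the telescoping sum, the exponentiation, and the time-reversal reduction for $h^u$ all go through as you describe.
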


 Note that since $\cH(\kappa)$ is compact, there exists
 $0<h_\kappa<\infty$ such that $\max\{h^u,h^s\}\le h_\kappa$ on
 $\cH(\kappa)$.

\subsubsection{Recurrent and Lebesgue density points}
\label{sec:recurr-lebesg-densit}

We are now ready to start the proof of
Lemma~\ref{le:DDFLP-Wsinside}.

Let us take a strong-unstable disk $W^{uu}(x)$ satisfying
simultaneously
\begin{itemize}
\item $x\in\cH(\kappa)$,
\item $\mu_u\big(W^{uu}(x)\cap
A\big)>0$ and 
\item $x$ is a $\mu_u$ density point of
$W^{uu}(x)\cap A$. 
\end{itemize}

For this it is enough to take $\kappa$
big enough since by the absolute continuity of the foliation
of strong-unstable disks a positive volume subset, as
$\cH(\kappa)$, must intersect almost all strong-stable disks
on a subset of $\mu_u$ positive measure, see
e.g. \cite{PS89}.

Using the Recurrence Theorem we can also assume without loss
of generality that $x$ is recurrent inside $\cH(\kappa)$,
that is, there exists a strictly increasing sequence of
integers $n_1<n_2<\dots$ such that
\begin{align*}
 x_k:= f^{n_k}(x)\in\cH(\kappa) \quad\text{for all}\quad k\in\NN
  \quad\text{and}\quad x_k \xrightarrow[k\to\infty]{}x.
\end{align*}
Therefore we can consider the disk
$W_k=f^{-n_k}\big(W_{loc}^{uu}(x_k)\big)$. Observe that
$W_k\subset W^{uu}_{loc}(x)$ is a neighborhood of $x$ and
since the sizes of the strong-unstable disks on
$\cH(\kappa)$ are uniformly bounded we see that
$\diam\big(W_k)\to0$ exponentially fast as $k\to+\infty$.

Now $W^{uu}_{loc}(x)$ is one-dimensional in our setting and
thus the shrinking of $W_k$ to $x$ together with the
$f$-invariance of $A$ are enough to ensure
\begin{align*}
  \frac{\mu_u\Big(f^{-n_k}\big(W_{loc}^{uu}(x_k)\setminus
    A\big)\Big)}{\mu_u\Big(f^{-n_k}\big(W_{loc}^{uu}(x_k)\big)\Big)}
  =\frac{\mu_u\big(W_k\setminus A\big)}{\mu_u(W_k)}
  \xrightarrow[k\to\infty]{} 0.
\end{align*}
Finally the bounded distortion given by
Theorem~\ref{thm:boundedistortion} implies
\begin{align*}
  \frac{\mu_u\Big(f^{-n_k}\big(W_{loc}^{uu}(x_k)\setminus
    A\big)\Big)}{\mu_u\Big(f^{-n_k}\big(W_{loc}^{uu}(x_k)\big)\Big)}
  &=
  \frac{\int_{W_{loc}^{uu}(x_k)\setminus A}|\det
    Df^{-n_k}\mid E^u(z)|\,d\mu_u(z)}{\int_{W_{loc}^{uu}(x_k)}|\det
    Df^{-n_k}\mid E^u(z)|\,d\mu_u(z)}
  \\
  &\ge
  \frac1{h^u_\kappa} \cdot  \frac{\mu_u\big(W_{loc}^{uu}(x_k)\setminus
    A\big)}{\mu_u(W_{loc}^{uu}(x_k))},
\end{align*}
which means that
\begin{align*}
  \frac{\mu_u\big(W_{loc}^{uu}(x_k)\setminus
    A\big)}{\mu_u(W_{loc}^{uu}(x_k))}
  \le
  h_\kappa\cdot\frac{\mu_u\big(W_k\setminus A\big)}{\mu_u(W_k)}
\end{align*}
for all $k\ge1$. Hence we get
$\mu_u(W_{loc}^{uu}(x)\setminus A)=0$ by the choice of $x_k$
and the continuous dependence of the strong-unstable disks
on the points of the hyperbolic block $\cH(\kappa)$.  The
argument for the stable direction is the same. Since the
points of a full $\mu_A$ measure subset have all the
properties we used, this concludes the proof of
Lemma~\ref{le:DDFLP-Wsinside} and of the
property~\eqref{eq:WscontainedA}.

\subsubsection{Dense invariant manifolds of a periodic orbit}
\label{sec:invari-manifolds-per}

Now we use the density of periodic points in $A$ (property
\eqref{eq:perdense}). Consider again a hyperbolic block
$\cH(\kappa)$ with a big enough $\kappa\in\NN$ such that
$\mu_A(\cH(\kappa))>0$. For any given $x\in\cH(\kappa)$
and $\delta>0$ there exists a hyperbolic periodic orbit
$\cO(p)$ intersecting $B(x,\delta)$. Because the sizes and
angles of the stable and unstable disks of points in
$\cH(\kappa)$ are uniformly bounded away from zero, we can
ensure that we have the following transversal intersections
\footnote{Recall the difference between $W^{uu}(p)$ and $W^u(p)$ etc
in the flow setting.}
\begin{align*}
  W^u(p)\pitchfork W^s(x)\neq\emptyset\neq W^s(p)\pitchfork W^u(x).
\end{align*}
This together with the Inclination Lemma implies that
\begin{align}
  \label{eq:WsperiodicInside}
  \overline{W^{u}(p)}=\overline{W^u(x)}\subset A
  \quad\text{and}\quad
  \overline{W^{s}(p)}=\overline{W^s(x)}\subset A.
\end{align}
Moreover since we can pick any $x\in\cH(\kappa)$ we can
assume without loss that $x$ has a dense orbit in $A$ (since
we took $\mu_A$ to be ergodic) and then we can
strengthen~\eqref{eq:WsperiodicInside} to: there exists a
periodic orbit $\cO(p)$ inside $A$ such that
\begin{align}
  \label{eq:WperiodicDense}
  \overline{W^{u}(p)}= A
  \quad\text{and}\quad
  \overline{W^{s}(p)}= A.
\end{align}

\subsubsection{Absence of singularities in $A$}
\label{sec:absence-singul-a}

We recall that the \emph{alpha-limit set} of a point $p\in
M$ with respect to the flow $X$ is the set $\alpha(p)$ of
all limit points of $X^{-t}(p)$ as $t\to+\infty$. Likewise
the \emph{omega-limit set} is the set $\omega(p)$ of limit
points of $X^t(p)$ when $t\to+\infty$. Both these sets are
flow-invariant.

Using property~\eqref{eq:WperiodicDense} we consider, on the
one hand, the
invariant compact subset of $A$ given by
\begin{align*}
  L=\alpha_X(W^{ss}(p))
\end{align*}
the closure of the accumulation points of backward orbits of
points in the strong-stable manifold of the periodic orbit
$\cO(p)$. By~\eqref{eq:WperiodicDense} we have $L=A$.  On
the other hand, considering $N=\omega_X(W^{uu}(p))$ we
likewise obtain that $N=A$.

Let us assume that $\sigma$ is a singularity contained in
$A$. By Lemma~\ref{le:singLorenz} $\sigma$ is either
Lorenz-like for $X$ or Lorenz-like for $-X$. 

In the former case, we would get $W^{ss}(\sigma)\subset A$
because any compact part of the strong-stable manifold of
$\sigma$ is accumulated by backward iterates of a small
neighborhood $\gamma$ inside $W^{ss}(x)$. Here we are using
that the contraction along the strong-stable manifold, which
becomes an expansion for negative time, is uniform.  In the
latter case we would get $W^{uu}(\sigma)\subset A$ by a
similar argument reversing the time direction.

We now explain that each one of these possibilities leads to
a contradiction with the dominated splitting of the Linear
Poincaré Flow on the regular orbits of $A$, following an
argument in~\cite{MPP04}. It is enough to deduce a
contradiction for a Lorenz-like singularity for $X$, since
the other case reduces to this one through a time inversion.

If $W^{ss}(\sigma)\cap A\setminus\{\sigma\}\supset\{y\}$ for
some point $y\in A$ and for some singularity $\sigma\in A$,
then we have countably distinct regular orbits of $\Lambda$
accumulating on $y\in W^{ss}(\sigma)$ (by the definition of
$A$) and on a point $q\in W^u(\sigma)$  (by the dynamics of
the flow near $\sigma$). 

Applying the Connecting Lemma, we obtain a saddle-connection
associated to the continuation of $\sigma$ for a $C^1$-close
vector field $Y$, known as ``orbit-flip'' connection, that
is, there exists a homoclinic orbit $\Gamma$ associated to
$\sigma_Y$ such that $W^{cu}(\sigma_Y)$ intersects
$W^s(\sigma_Y)$ transversely along $\Gamma$, i.e.
$\Gamma=W^{cu}(\sigma_Y)\pitchfork W^s(\sigma_Y)$, and also
$\Gamma\cap W^{ss}(\sigma_Y)\neq\emptyset$.

These connections can be $C^1$ approximated by
``inclination-flip'' connections for another $C^1$ nearby
vector field $Z$, \emph{not necessarily conservative}, see
e.g.~\cite{MPa2,AraPac07}. This means that the continuation
$\sigma_Z$ of the singularity has an associated homoclinic
orbit $\gamma$ such that $W^{cu}(\sigma_Z)$ intersects
$W^s(\sigma_Z)$ along $\gamma$ \emph{but not transversely},
and $\gamma\cap W^{ss}(\sigma_Z)=\emptyset$.

However the presence of ``inclination-flip'' connections is
an obstruction to the dominated decomposition of the Linear
Poincar\'e Flow for nearby regular orbits. This contradicts
Lemma~\ref{le:persistdom} and concludes the proof of
Proposition~\ref{pr:DDLPF-singhyp}.

\section{Uniform hyperbolicity}
\label{sec:uniform-hyperb}

Here we conclude the proof of
Theorem~\ref{thm:there-exists-generic}, showing that proper
invariant hyperbolic subsets of a $C^{1+}$ incompressible
flow cannot have positive volume.

\begin{proposition}
  \label{pr:unifhyp-volzero}
  Let $A$ be a compact invariant hyperbolic subset for
  $X\in\fX^{1+}_{\mu}(M)$.  Then either $\mu(A)=0$ or else
  $X$ is an Anosov flow and $A=M$.
\end{proposition}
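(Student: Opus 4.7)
I argue by contradiction: suppose $\mu(A)>0$; I will show $A=M$, which automatically makes $X$ Anosov since the hyperbolic splitting on $A$ is then defined on all of $M$. The strategy follows \cite[Appendix B]{BochVi04}, translated from the diffeomorphism to the flow setting.

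First, because $X\in\fX^{1+}_\mu(M)$ and $A$ is uniformly hyperbolic, the normalized Lebesgue measure $\mu_A=\mu|_A/\mu(A)$ is a hyperbolic invariant probability measure, and the strong-stable and strong-unstable foliations of $A$ are absolutely continuous with uniformly bounded Jacobian distortion (the uniform version of Theorem~\ref{thm:boundedistortion}, with the whole of $A$ playing the role of a hyperbolic block $\cH(\kappa)$) and with uniformly sized local leaves $W^{ss}_{loc}$, $W^{uu}_{loc}$. I would then repeat the shrinking one-dimensional-leaf argument used for Lemma~\ref{le:DDFLP-Wsinside}: at a Lebesgue density point $x\in A$, using recurrence and bounded distortion along backward iterates of $W^{uu}_{loc}(X^{n}(x))$, which shrink to $x$ as $n\to\infty$ because the unstable leaves are one-dimensional and uniformly contracted in backward time, I obtain $\mu_u(W^{uu}_{loc}(x)\setminus A)=0$ and hence $W^{uu}_{loc}(x)\subset A$ (since $A$ is closed), and symmetrically $W^{ss}_{loc}(x)\subset A$.

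Next I would promote this to nonempty interior via the flow-box / local product structure at $x$: a small cylinder around $x$ factors as $W^{ss}_{loc}(x)\times W^{uu}_{loc}(x)\times(-\tau,\tau)$, with the last factor the flow direction. Absolute continuity of the strong-stable holonomy between nearby strong-unstable disks transfers the full $\mu_u$-measure conclusion along $W^{uu}_{loc}(x)$ to almost every nearby disk $W^{uu}_{loc}(y)$ with $y\in W^{ss}_{loc}(x)$; Fubini then yields that the cylinder meets $A$ in full Lebesgue measure, and closedness gives that the whole cylinder is contained in $A$. Therefore $V:=\inte(A)\neq\emptyset$.

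To finish, I would show that $\bar V$ is both open and closed in $M$, which by connectedness forces $\bar V=M$ and hence $A=M$. Closedness is trivial. For openness, observe first that for any $p\in\bar V$ the local strong leaves $W^{ss}_{loc}(p),W^{uu}_{loc}(p)$ lie in $\bar V$: for $p\in V$ this is immediate, and for $p\in\partial V$ it follows by approximating $p$ by points $p_n\in V$ and using continuity of the strong foliations on the compact hyperbolic set $A$ together with closedness of $\bar V$; by flow-invariance also $W^{s}_{loc}(p),W^{u}_{loc}(p)\subset\bar V$. For $q,r\in\bar V$ close enough to a fixed $p\in\bar V$ the local bracket $[q,r]:=W^{ss}_{loc}(q)\cap W^{u}_{loc}(r)$ is well defined, continuous, and lies in $\bar V$, and the standard flow-box parametrization $(q,r)\mapsto[q,r]$ from $W^{uu}_{loc}(p)\times W^{s}_{loc}(p)$ onto a neighborhood of $p$ exhibits that neighborhood inside $\bar V$. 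The main technical obstacle is the uniformization of the distortion and absolute-continuity estimates over all of $A$ (rather than only on Pesin blocks, as in Section~\ref{sec:singul-hyperb}) and the careful bookkeeping of the flow factor in the local product structure; both are standard in the uniformly hyperbolic theory and essentially amount to the flow version of the arguments of \cite[Appendix B]{BochVi04}.
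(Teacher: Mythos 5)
Your proposal is a legitimate alternative to the paper's proof, but it follows a genuinely different and more laborious route. The paper does \emph{not} reproduce the density-point/interior/connectedness argument of \cite[Appendix B]{BochVi04} here. Instead it cites Theorem~\ref{thm:a2p2-2.2} (Alves--Ara\'ujo--Pacifico--Pinheiro) to immediately obtain a strong-stable disk $\gamma$ contained in $A$, and then argues via limit sets: by Lemma~\ref{le:cons-disk-Wss-in} the compact invariant set $L=\alpha_X(\gamma)$ is saturated by strong-stable manifolds, hence $W^s(L)=L$, hence $W^u(L)$ is an open neighborhood of $L$, which means $L$ is a topological repeller. Volume preservation then forces $L=M$ (and so $A=M$) without ever invoking connectedness or local product coordinates. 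This short-circuits your Steps~2 and~3 entirely.

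Your Step~1 (local strong leaves at $\mu_A$-a.e.\ point lie in $A$) is fine and parallels Lemma~\ref{le:DDFLP-Wsinside}, with uniform distortion bounds in place of Pesin blocks. The delicate spot is your Step~2, the passage from ``a.e.\ local $W^{ss}$ and $W^{uu}$ are in $A$'' to ``$A$ has nonempty interior.'' As written, ``absolute continuity of the strong-stable holonomy transfers the full $\mu_u$-measure conclusion to almost every nearby disk $W^{uu}_{loc}(y)$ with $y\in W^{ss}_{loc}(x)$'' is not quite a correct inference: the strong-stable holonomy $h$ moves a point $z\in W^{uu}_{loc}(x)$ along $W^{ss}_{loc}(z)$, so $h(z)\in A$ requires $W^{ss}_{loc}(z)\subset A$, i.e.\ $z\in G^s$; and the set $G^s$ being of full $\mu_A$-measure does not by itself say anything about its trace on the one-dimensional transversal $W^{uu}_{loc}(x)$. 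To make this rigorous one must iterate the density-point/distortion argument in the three directions (unstable, stable, flow), or work with the flow-box disintegration as in \cite[Appendix B]{BochVi04} and Bowen--Ruelle. Your Step~3 (openness and closedness of $\overline{\inte A}$ via the bracket chart) is standard and correct once Step~2 is secured. In short: the route is viable and more self-contained than the paper's (it avoids citing the AAPP theorem), but the interior step needs the iterated absolute-continuity argument spelled out; the paper's repeller argument avoids the interior step altogether and is considerably shorter once the disk is obtained.
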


The proof of Proposition~\ref{pr:unifhyp-volzero} is given
as a sequence of intermediate results along the rest of this
section.  Assuming this result we easily have the following.

\begin{proof}[Proof of
  Theorem~\ref{thm:there-exists-generic}]
  From Corollary~\ref{cor:DDFLP-unifhyp} we have that a
  regular invariant subset with positive volume with
  dominated splitting for the Linear Poincaré Flow admits a
  positive volume subset which is hyperbolic. Therefore the
  flow of $X$ is Anosov from
  Proposition~\ref{pr:unifhyp-volzero}.
\end{proof}

\subsection{Positive volume hyperbolic sets and
  conservative Anosov flows}
\label{sec:positive-volume-hype}

We start the proof by recalling the notion of partial
hyperbolicity.

Let $\Lambda$ be a compact invariant subset for a $C^1$ flow
on a compact boundaryless manifold $M$ with dimension at
least $3$. We say that $\Lambda$ is \emph{partially
  hyperbolic} if there are a continuous invariant tangent
bundle decomposition $T_\Lambda M=E^s\oplus E^c$ and
constants $\lambda,K>0$ such that for all $x\in\Lambda$ and
for all $t\ge0$
\begin{itemize}
\item
{\em $E^s$ dominates $E^c$}:
$
\| DX^t(x)\mid E^s_x\|\cdot \| DX^{-t}\mid E^c_{X^t(x)}\|
\leq  K e^{-\lambda t}
$
\item
$E^s$ is uniformly contracting:
$
  \|DX^t\mid E^s_x\|\le K e^{-\lambda t}.
$
\end{itemize}
We note that for a partially hyperbolic set of a flow
\emph{the flow direction must be contained in the central
  bundle.}

Now we recall the following result.

  \begin{theorem}{\cite[Theorem 2.2]{AAPP}}
    \label{thm:a2p2-2.2}
    Let \( f: M\to M \) be a \( C^{1+} \) diffeomorphism
    and let $\Lambda\subset M$ be a partially hyperbolic set
    with positive volume.  Then $\Lambda$ contains a
    strong-stable disk.
  \end{theorem}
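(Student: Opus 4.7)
The plan is to adapt the density-point and bounded-distortion argument used in the proof of Lemma~\ref{le:DDFLP-Wsinside} to the present uniformly hyperbolic setting. The strategy has three movements: first, use absolute continuity of the strong-stable foliation to locate a single leaf meeting $\Lambda$ in positive $\mu_s$-measure; second, exploit backward iteration combined with bounded distortion along stable leaves to force the $\mu_s$-density of $\Lambda$ in an appropriate local strong-stable disk to be full; third, conclude via closedness of $\Lambda$ that this disk sits entirely inside $\Lambda$.

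First I would invoke the classical stable manifold theorem for the uniformly contracted sub-bundle $E^s$ to produce, for every $x\in\Lambda$, a $C^{1+}$ embedded local strong-stable disk $W^{ss}_{\mathrm{loc}}(x)$ of uniform size $\delta_0>0$ tangent to $E^s_x$, varying continuously with $x\in\Lambda$. Since $f$ is $C^{1+}$ and $E^s$ is uniformly contracted and dominates $E^c$, the resulting strong-stable foliation is absolutely continuous by Brin-Pesin's theorem. Pick a Lebesgue density point $x_0\in\Lambda$, which exists because $\mu(\Lambda)>0$, and take a foliation box $U$ around $x_0$ with a transversal $\Sigma$. Absolute continuity together with the Fubini-type disintegration applied to $\mu(\Lambda\cap U)>0$ yields a leaf $D=W^{ss}_{\mathrm{loc}}(y_0)$ with $\mu_s(D\cap\Lambda)>0$; the Lebesgue density theorem on $D$ then produces a $\mu_s$-density point $z_0\in D\cap\Lambda$ of $D\cap\Lambda$ inside $D$.

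Next I would iterate backward. Using invariance of $\Lambda$, set $z_{-n}:=f^{-n}(z_0)\in\Lambda$ and $V_n:=f^n\bigl(W^{ss}_{\mathrm{loc}}(z_{-n})\bigr)$, a sub-disk of $W^{ss}(z_0)$ centred at $z_0$ whose diameter shrinks exponentially by uniform contraction of $E^s$; for $n$ large, $V_n\subset D$, so the density property at $z_0$ inside $D\cap\Lambda$ gives $\mu_s(V_n\setminus\Lambda)/\mu_s(V_n)\to 0$. The pre-images $f^{-i}(V_n)$ for $0\le i\le n$ are sub-disks of $W^{ss}_{\mathrm{loc}}(z_{-n})$ whose diameters form a geometric sequence of ratio $\lambda<1$ (the stable contraction rate); together with the H\"older control on $Df$ along $E^s$ provided by $C^{1+}$ regularity, this yields the standard bounded-distortion estimate
\[
\frac{\mu_s\bigl(W^{ss}_{\mathrm{loc}}(z_{-n})\setminus\Lambda\bigr)}{\mu_s\bigl(W^{ss}_{\mathrm{loc}}(z_{-n})\bigr)}\;\le\; K\cdot\frac{\mu_s(V_n\setminus\Lambda)}{\mu_s(V_n)}
\]
for a distortion constant $K$ independent of $n$. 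Hence the left-hand side tends to $0$ as $n\to\infty$.

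To conclude, by compactness of $\Lambda$ choose a subsequence $z_{-n_k}\to z^*\in\Lambda$. Continuous dependence of the local stable disks on the base point yields $W^{ss}_{\mathrm{loc}}(z_{-n_k})\to W^{ss}_{\mathrm{loc}}(z^*)$ in the $C^1$ topology, so the estimate above forces $\mu_s\bigl(W^{ss}_{\mathrm{loc}}(z^*)\setminus\Lambda\bigr)=0$. Since $\Lambda$ is closed, $W^{ss}_{\mathrm{loc}}(z^*)\cap\Lambda$ is a relatively closed subset of $W^{ss}_{\mathrm{loc}}(z^*)$ of full $\mu_s$-measure, so its complement is an open set of zero $\mu_s$-measure, hence empty, and $W^{ss}_{\mathrm{loc}}(z^*)\subset\Lambda$ is the desired strong-stable disk. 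The main obstacle will be the uniform bounded-distortion estimate above: it rests on the H\"older control furnished by $C^{1+}$ regularity together with the telescoping property that the diameters of $\{f^{-i}(V_n)\}_{0\le i\le n}$ form a geometric sequence, so that the relevant distortion sums converge uniformly in $n$.
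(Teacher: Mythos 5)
The paper does not give its own proof of this statement: it is imported verbatim as Theorem~2.2 of \cite{AAPP}. What I can usefully do is check your argument on its merits and against the published proof (and against Lemma~\ref{le:DDFLP-Wsinside} of this paper, which you are explicitly adapting). Your overall scheme --- absolute continuity of the strong-stable lamination to get a leaf meeting $\Lambda$ in positive $\mu_s$-measure, a leafwise density point, backward recurrence together with $C^{1+}$ bounded distortion along stable disks, and a compactness/closedness passage to the limit --- is indeed the structure of the reference proof and of Lemma~\ref{le:DDFLP-Wsinside}. The final limiting step is fine: closedness of $\Lambda$ plus $C^1$-continuity of $x\mapsto W^{ss}_{\mathrm{loc}}(x)$ gives $\mu_s\bigl(W^{ss}_{\mathrm{loc}}(z^*)\setminus\Lambda\bigr)\le\liminf_k\mu_s\bigl(W^{ss}_{\mathrm{loc}}(z_{-n_k})\setminus\Lambda\bigr)=0$, and then the open-set argument yields $W^{ss}_{\mathrm{loc}}(z^*)\subset\Lambda$.

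There are, however, two places that need repair. First, the appeal to ``Brin--Pesin'' for absolute continuity: over a partially hyperbolic \emph{set} $\Lambda$ (rather than the whole manifold) the family $\{W^{ss}_{\mathrm{loc}}(x):x\in\Lambda\}$ is only a lamination, not a foliation of a neighborhood of $\Lambda$, so the theorem in its usual form does not apply directly. What is needed is the Fubini-type absolute continuity of this stable lamination (disintegration of volume with conditionals equivalent to $\mu_s$), which does hold for $C^{1+}$ dynamics with a uniformly contracted subbundle by Pesin's absolute continuity theorem (see e.g.\ \cite{PS89,barreira-pesin2}); this is exactly the point where $C^{1+}$ is used a second time, and it should be stated and cited rather than absorbed into a name-drop.

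Second, and more seriously, the step ``$V_n\subset D$ shrinks to the density point $z_0$, hence $\mu_s(V_n\setminus\Lambda)/\mu_s(V_n)\to0$'' is not justified when $\dim E^s\ge2$. The Lebesgue density theorem applies to balls (or to a differentiation basis of bounded eccentricity), and there are classical examples of density points $z_0$ of a set $E\subset\RR^d$ and rectangles $R_n\ni z_0$ with $\operatorname{diam}R_n\to0$ for which $\mu(R_n\setminus E)/\mu(R_n)\not\to0$. Here $V_n=f^n\bigl(W^{ss}_{\mathrm{loc}}(z_{-n})\bigr)$ is, up to bounded distortion, the image of a round disk under $Df^n|E^s$, whose singular values along different stable directions are not comparable in general: without a bunching hypothesis, the eccentricity of $V_n$ can blow up as $n\to\infty$, so the conclusion does not follow from $z_0$ being a density point. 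This is a genuine gap in the argument as you have written it; the published proofs close it by a more careful choice of reference sets and a Vitali/Besicovitch-type covering argument rather than a single density point. That said, in the only way this result is used in the present paper --- time-one maps of hyperbolic sets of three-dimensional flows, so $\dim E^s=1$ and the leaves are arcs --- your argument is correct as written, since one-dimensional shrinking intervals form a regular differentiation basis. Minor typo: for $0\le i\le n$, the set $f^{-i}(V_n)$ sits inside $W^{ss}_{\mathrm{loc}}(z_{-i})$, not $W^{ss}_{\mathrm{loc}}(z_{-n})$; the geometric decay of diameters that feeds the distortion sum is along the orbit, as you clearly intend.
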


Now we can use an argument similar to the one presented in
Subsection~\ref{sec:absence-singul-a}.

  \begin{lemma}
    \label{le:cons-disk-Wss-in}
    Let $X\in\fX^{1+}_{\mu}(M)$ and $\Lambda$ be a compact
    invariant partially hyperbolic subset containing a
    strong-stable disk $\gamma$. Then
    $L=\alpha_X(\gamma)=\{\alpha(z):z\in\gamma\}$ contains all
    stable disks through its points.
  \end{lemma}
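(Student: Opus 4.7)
The plan is to exploit the backward expansion of the strong-stable direction along $\gamma$, together with the continuity of $E^s$ on the compact partially hyperbolic set $\Lambda$. Fix $y\in L$; by definition there exist $z\in\gamma$ and a sequence $t_n\to+\infty$ so that $X^{-t_n}(z)\to y$. My goal is to show $W^{ss}(y)\subset L$, from which flow-invariance of $L$ will give $W^s(y)=\bigcup_{t\in\RR}X^t\big(W^{ss}(y)\big)\subset L$.

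First I would note that, since $\gamma$ is a strong-stable disk inside $\Lambda$ and $E^s$ is $DX^t$-invariant, each backward iterate $\gamma_n:=X^{-t_n}(\gamma)$ is again a disk tangent to $E^s$ and contained in $\Lambda$; moreover the uniform contraction $\|DX^t\mid E^s\|\le Ke^{-\lambda t}$ turns into uniform expansion in backward time along $E^s$, so that the intrinsic diameter of $\gamma_n$ grows at least like $K^{-1}e^{\lambda t_n}\diam(\gamma)$. Fixing any $R>0$, for all large $n$ I can carve out a subdisk $D_n\subset\gamma_n$ tangent to $E^s$, centered at $X^{-t_n}(z)$, with intrinsic radius $R$. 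Because $E^s$ is continuous on the compact set $\Lambda$ and all $D_n$ are graphs of uniformly controlled maps over $E^s_{X^{-t_n}(z)}$, an Arz\'ela--Ascoli argument furnishes a subsequence (not relabeled) converging in the $C^0$ (in fact $C^1$) topology to a disk $D_\infty$ through $y$ tangent to $E^s_y$, of intrinsic radius $R$.

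Next I would verify the two properties $D_\infty\subset L$ and $D_\infty\subset W^{ss}(y)$. For the first, every $\xi\in D_\infty$ is of the form $\xi=\lim_n X^{-t_n}(w_n)$ for some $w_n\in\gamma$, hence is an accumulation point of backward orbits of points of $\gamma$, i.e.\ $\xi\in\alpha_X(\gamma)=L$. For the second, the disks $D_n$ inherit from the partial hyperbolicity of $\Lambda$ the uniform backward-expansion estimate characteristic of strong-stable disks; passing to the limit gives that any two points of $D_\infty$ approach one another exponentially under forward iteration, so $D_\infty\subset W^{ss}(y)$. Thus an open strong-stable disk through $y$ of radius $R$ lies in $L$, and since $R$ was arbitrary and $W^{ss}(y)$ is exhausted by such disks, we conclude $W^{ss}(y)\subset L$, whence $W^s(y)\subset L$ by invariance.

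The main obstacle I anticipate is the passage to the limit in step two: one must guarantee that the family of disks $D_n$ has the equicontinuity needed to extract a $C^0$ (and ideally $C^1$) limit, and that this limit is not merely an integral disk of $E^s$ at $y$ but actually a piece of $W^{ss}(y)$. Both issues are resolved by the continuity of $E^s$ on $\Lambda$ (which gives uniform tangent-plane control), the uniform constants $K,\lambda$ from partial hyperbolicity (which give uniform graph and expansion estimates), and the standard dynamical characterization of $W^{ss}(y)$ as the set of points whose backward orbits diverge from $y$ at the uniform exponential rate $e^{\lambda t}$, a rate shared by all the $D_n$.
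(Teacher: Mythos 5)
Your proof is correct and follows essentially the same route as the paper's, which compresses the argument into a single sentence: any compact portion of $W^{ss}(z)$, for $z\in\alpha(\gamma)$, is accumulated by the backward iterates of a small piece of $\gamma$, because the contraction along the strong-stable bundle becomes a uniform expansion under $X^{-t}$. Your write-up simply expands this into the compactness/Arzel\`a--Ascoli extraction of a limit disk $D_\infty$ and the verification that it lies both in $L$ and in $W^{ss}(y)$; the same uniform backward-expansion, continuity of $E^s$ on the compact set $\Lambda$, and unique integrability of $E^s$ are the ingredients in both. One small improvement in your version worth noting: you explicitly land in $L=\alpha_X(\gamma)$ rather than merely in $\Lambda$ (as the paper literally writes), which is the containment actually used later in the proof of Proposition~\ref{pr:unifhyp-volzero}.
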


  \begin{proof}
    The partial hyperbolic assumption on $A$ ensures that
    every one of its points has a strong-stable
    manifold. Moreover
\begin{equation}\label{eq.ssinside}
W^{ss}(z)\subset\Lambda \quad\text{for every } z\in \alpha(\gamma),
\end{equation}
since any compact part of the strong-stable manifold of $z$
is accumulated by backward iterates of any small
neighborhood of $x\in\gamma$ inside $W^{ss}(x)$. Here we are
using that the contraction along the strong-stable manifold,
which becomes an expansion for negative time, is uniform.
  \end{proof}

  \begin{proof}[Proof of
  Proposition~\ref{pr:unifhyp-volzero}]
  Let $A$ be a hyperbolic subset for $X\in\fX^1_\mu(M)$ with
  $\mu(A)>0$.  From Lemma~\ref{le:cons-disk-Wss-in} we have
  that $L=\alpha(\gamma)$ satisfies
  $W^{ss}(L)=\{W^{ss}(z):z\in L\}\subset L$. This implies
  $W^{s}(L)= L$ by invariance.
    
  Consider now $W^u(L)=\{W^u(z):z\in L=W^s(L)\}$. This
  collection of unstable leaves crossing the stable leaves
  of $L$ forms a neighborhood of $L$.  But $U=W^u(L)$ being
  a neighborhood of $L$ means that $L$ is a repeller: for
  $w\in U$ we have
  $\dist\big(X^{-t}(w),L\big)\xrightarrow[t\to+\infty]{}0$.

    This contradicts the preservation of the volume form
    $\mu$, unless $L$ is the whole of $M$. Thus
    $M=L\subset A$ and $X$ is Anosov.
  \end{proof}


\def\cprime{$'$}


\end{document}